\documentclass[12pt,oneside,american,british,english]{amsart}
\usepackage[T1]{fontenc}
\usepackage[utf8]{inputenc}
\usepackage{textcomp}
\usepackage{amstext}
\usepackage{amsthm}
\usepackage{amssymb}
\usepackage{esint}
\usepackage{color}
\inputencoding{utf8}

\newcommand{\rmd}{\mathrm{d}}

\newcommand{\bt}{\beta}
\newcommand{\ep}{\varepsilon}

\newcommand{\lm}{\lambda}

\newcommand{\om}{\omega}

\newcommand{\cadlag}{c\`adl\`ag }

\newcommand{\mc}[1]{\mathcal{#1}}
\newcommand{\mb}[1]{\mathbb{#1}}
\newcommand{\mbf}[1]{\mathbf{#1}}

\newcommand{\bi}{\begin{itemize}}
\newcommand{\ei}{\end{itemize}}
\newcommand{\bg}{\begin}
\newcommand{\e}{\end}
\newcommand{\ed}{

\makeatletter
\numberwithin{equation}{section}
\numberwithin{figure}{section}
\theoremstyle{plain}
\newtheorem{thm}{\protect\theoremname}[section]
  \theoremstyle{plain}
  \newtheorem{assumption}[thm]{\protect\assumptionname}
  \theoremstyle{definition}
  \newtheorem{defn}[thm]{\protect\definitionname}
  \theoremstyle{remark}
  \newtheorem{rem}[thm]{\protect\remarkname}
  \theoremstyle{plain}
  \newtheorem{lem}[thm]{\protect\lemmaname}
  \theoremstyle{plain}
  \newtheorem{cor}[thm]{\protect\corollaryname}

\usepackage[displaymath]{lineno}

\makeatother

\usepackage{babel}
  \addto\captionsamerican{\renewcommand{\assumptionname}{Assumption}}
  \addto\captionsamerican{\renewcommand{\corollaryname}{Corollary}}
  \addto\captionsamerican{\renewcommand{\definitionname}{Definition}}
  \addto\captionsamerican{\renewcommand{\lemmaname}{Lemma}}
  \addto\captionsamerican{\renewcommand{\remarkname}{Remark}}
  \addto\captionsamerican{\renewcommand{\theoremname}{Theorem}}
  \addto\captionsbritish{\renewcommand{\assumptionname}{Assumption}}
  \addto\captionsbritish{\renewcommand{\corollaryname}{Corollary}}
  \addto\captionsbritish{\renewcommand{\definitionname}{Definition}}
  \addto\captionsbritish{\renewcommand{\lemmaname}{Lemma}}
  \addto\captionsbritish{\renewcommand{\remarkname}{Remark}}
  \addto\captionsbritish{\renewcommand{\theoremname}{Theorem}}
  \addto\captionsenglish{\renewcommand{\assumptionname}{Assumption}}
  \addto\captionsenglish{\renewcommand{\corollaryname}{Corollary}}
  \addto\captionsenglish{\renewcommand{\definitionname}{Definition}}
  \addto\captionsenglish{\renewcommand{\lemmaname}{Lemma}}
  \addto\captionsenglish{\renewcommand{\remarkname}{Remark}}
  \addto\captionsenglish{\renewcommand{\theoremname}{Theorem}}
  \providecommand{\assumptionname}{Assumption}
  \providecommand{\corollaryname}{Corollary}
  \providecommand{\definitionname}{Definition}
  \providecommand{\lemmaname}{Lemma}
  \providecommand{\remarkname}{Remark}
\providecommand{\theoremname}{Theorem}

\begin{document}

\title{Unexpected Default in an Information Based Model}

\author{M. L. Bedini, R. Buckdahn, H.-J. Engelbert}

\date{\today}
\begin{abstract}
This paper provides sufficient conditions for the time of bankruptcy
(of a company or a state) for being a totally inaccessible stopping
time and provides the explicit computation of its compensator in a
framework where the flow of market information on the default is \foreignlanguage{british}{modelled}
explicitly with a Brownian bridge between 0 and 0 on a random time
interval.
\end{abstract}

\keywords{Default time, totally inaccessible stopping time, Brownian bridge
on random intervals, local time, credit risk, compensator process.}

\maketitle

\section{Introduction}

\noindent One of the most important objects in a mathematical model
for credit risk is the time $\tau$ (called \textit{default time})
at which a certain company (or state) bankrupts. \foreignlanguage{american}{Modelling}
the flow of market information concerning a default time is crucial
and in this paper we consider a process, $\beta=\left(\beta_{t},\,t\geq0\right)$,
whose natural filtration $\mathbb{F}^{\beta}$ describes the flow of information available for market agents about the time at which the default occurs. For this reason the process $\beta$ will be called the \textit{information process}. In the present paper, we define $\bt$ to be a Brownian bridge between 0 and 0 of random length $\tau$:
\[
\beta_{t}:=W_{t}-\frac{t}{\tau\vee t}W_{\tau\vee t},\quad t\geq0,
\]
where $W=\left(W_{t},\,t\geq0\right)$ is a Brownian motion independent
of $\tau$. 

In this paper, the focus is on the classification of the default time
with respect to the filtration $\mathbb{F}^{\beta}$ and our main
result is the following: If the distribution of the default time $\tau$
admits a continuous density $f$ with respect to the Lebesgue measure, then $\tau$
is a totally inaccessible stopping time and its compensator $K=\left(K_{t},\,t\geq0\right)$
is given by 
\[
K_{t}=\int_{0}^{t\wedge\tau}\frac{f\left(s\right)}{\int_s^\infty v^{\frac{1}{2}}\,(2\pi s\,\left(v-s\right))^{-\frac{1}{2}}\,f\left(v\right)\rmd v}\,\rmd L^{\bt}\left(s,0\right)
\]
where $L^\bt(t,0)$ is the local time of the information process $\bt$ at level $0$ up to time $t$. 

Knowing whether the default time is a predictable, accessible or totally inaccessible stopping time is very important in a mathematical credit risk model. A predictable default time is typical of structural credit risk models, while totally inaccessible default times are one of the most important features of reduced-form credit risk models. In the first framework, market agents know when the default is about to occur, while in the latter default occurs by surprise. The fact that financial
markets cannot foresee the time of default of a company makes the
reduced-form models well accepted by practitioners. In this sense, totally inaccessible default times seem to be the best candidates for modelling times of bankruptcy. We refer, among others, to the papers of Jarrow and Protter \cite{key-10} and of Giesecke \cite{key-9} on the relations between financial information and the properties of the default time, and also to the series of papers of Jeanblanc and Le Cam \cite{key-11,key-12,key-13}. It is remarkable that in our setting the default time is a totally inaccessible stopping time under the common assumption that it admits a continuous density with respect to the Lebesgue measure. Both the hypothesis that the default time admits a continuous density and its consequence that the default occurs by surprise are standard in mathematical credit risk models, but in the information-based approach there is the additional feature of an explicit model for the flow of information which is more sophisticated than the standard approach. There, the available information on the default is modelled through $\left(\mathbb{I}_{\left\{ \tau\leq t\right\} },\,t\geq0\right)$, the single-jump process occurring at $\tau$, meaning that people just know if the default has occurred or not. Financial reality can be more complex and there are actually periods in which default is more likely to happen than in others. In the information based approach, periods of fear of an imminent default correspond to situations where the information process is close to 0, while periods when investors are relatively sure that the default is not going to occur immediately correspond to situations where $\beta_{t}$ is far from 0.

The paper is organized as follows. In Section \ref{sec:The-Information-Process}
we recall the definition and the main properties of the information
process. In Section \ref{sec:The-Compensator-of} we state and prove
Theorem \ref{thm:MAIN} which is the main result of the paper. In Appendix \ref{sec:The-Local-Time} we provide the properties of the
local time associated with the information process. In Appendix \ref{sec:Auxiliary-Results}
we give the proofs of some auxiliary lemmas. Finally, in Appendix \ref{sec:The-Meyer-Approach}, for the sake of easy reference, we recall the so-called Laplacian approach developed by P.-A. Meyer (see, e.g., his book \cite{key-17}) for
computing the compensator of a right-continuous potential of class (D). It is an important ingredient of the approach adopted in this note to determine the compensator of the $\mathbb{F}^{\beta}$-submartingale $\left(\mathbb{I}_{\left\{ \tau\leq t\right\} },\,t\geq0\right)$.

The idea of modelling the information about the default time with
a Brownian bridge defined on a stochastic interval was introduced
in the thesis \cite{key-2}. The definition of the information process
$\beta$, the study of its basic properties and an application to
the problem of pricing a Credit Default Swap (one of the most traded
derivatives in the credit market) have also appeared recently in the
paper \cite{key-3}. 

Non-trivial and sufficient conditions for making the default time
a predictable stopping time will be considered in another paper, \cite{key-31}.
Other topics related with Brownian bridges on stochastic intervals (which will not be considered in this note) are concerned with the problem of studying the progressive enlargement of a reference filtration $\mathbb{F}$ by the filtration $\mathbb{F}^{\beta}$ generated by the information process and further applications to Mathematical Finance.

\section{\label{sec:The-Information-Process} The Information Process and its Basic Properties}

\noindent We start by recalling the definition and the basic properties of a Brownian bridge between $0$ and $0$ of random length. The material of this section resumes some of the results obtained in the paper \cite{key-3}, where we shall refer to for the proofs and more details on the basic properties of such process. 

If $A\subseteq\mathbb{R}$ (where $\mb R$ denotes the set of real numbers), then the set $A_{+}$ is defined as $A_{+}:=A\cap\{x\in\mathbb{R}:x\geq0\}$.
If $E$ is a topological space, then $\mathcal{B}(E)$ denotes the
Borel $\sigma$-algebra over $E$. The indicator function of a set
$A$ will be denoted by $\mathbb{I}_{A}$.  A function $f:\mathbb{R}\rightarrow\mathbb{R}$ will be said to be \textit{\cadlag} if it is right-continuous with
limits from the left.

Let $\left(\Omega,\mathcal{F},\mathbf{P}\right)$ be
a complete probability space. We denote by $\mathcal{N}_{P}$ the collection of $\mathbf{P}$-null sets of $\mathcal{F}$. If $\mathcal{L}$ is the law of the random variable $\xi$ we shall write $\xi\sim\mathcal{L}$. Unless otherwise specified, all filtrations considered in the following are supposed to satisfy the usual conditions of right continuity and completeness. 

Let $\tau:\Omega\rightarrow\left(0,+\infty\right)$
be a strictly positive random time, whose distribution function is
denoted by $F$: $F\left(t\right):=\mathbf{P}\left(\tau\leq t\right),\;t\in\mathbb{R}_{+}$.
The time $\tau$ models the random time at which some default occurs and hereinafter it will be called \textit{default time}.

Let $W=\left(W_{t},\,t\geq0\right)$ be a Brownian motion defined on $\OFP$ and starting from 0. We shall always make use of the following assumption:
\begin{assumption}
\label{ass:Assumption} The random time $\tau$ and the Brownian motion
$W$ are independent.
\end{assumption}
Given $W$ and a strictly positive real number $r$, a standard Brownian bridge $\beta^{r}=\left(\beta_{t}^{r},\,t\geq 0\right)$ between $0$ and $0$ of length $r$ is defined by 
$$
\beta^{r}_t=W_{t}-\frac{t}{r\vee t}W_{r\vee t},\quad t\geq0\,.
$$
For further references on Brownian bridges, see, e.g., Section 5.6.B
of the book \cite{key-16} of Karatzas and Shreve. 

Now we are going to introduce the definition of the Brownian bridge of random length (see \cite{key-3}, Definition 3.1).
\begin{defn}{\em
The process $\beta=\left(\beta_{t},\,t\geq0\right)$ given by
\begin{equation}
\beta_{t}:=W_{t}-\frac{t}{\tau\vee t}W_{\tau\vee t},\quad t\geq0\,,\label{eq:info-proc}
\end{equation}
will be called Brownian bridge of random length $\tau$. We will often say that $\beta=\left(\beta_{t},\,t\geq0\right)$ is the information process (for the random time $\tau$ based on $W$).}
\end{defn} 
The natural filtration of $\bt$ will be denoted by $\mathbb{F}^{\beta}=(\mathcal{F}_{t}^{\beta})_{t\geq0}$:
\[
\mathcal{F}_{t}^{\beta}:=\sigma\left(\beta_{s},\,0\leq s\leq t\right)\vee\mathcal{N}_{P}\,.
\]
Note that according to \cite{key-3}, Corollary 6.1, the filtration $\mb F^\beta$ (denoted therein by $\mb F^P$) satisfies the usual conditions of right-continuity and completeness.
\begin{rem}
The law of $\beta$, conditional on $\tau=r$, is
the same as that of a standard Brownian bridge between $0$ and $0$ of length $r$ (see \cite{key-3}, Lemma 2.4 and Corollary 2.2). In particular, if $0<t<r$, the law of $\bt_t$, conditional on $\tau=r$, is Gaussian with expectation zero and variance $\frac{t\left(r-t\right)}{r}$:
\[
\mbf P\left(\bt_t\in\cdot\;\big|\tau=r\right)=\mathcal{N}\left(0,\frac{t\left(r-t\right)}{r}\right),
\]
where $\mathcal{N}\left(\mu,\sigma^{2}\right)$ denotes the Gaussian
law of mean $\mu$ and variance $\sigma^{2}$.
\end{rem} 

By $p\left(t,\cdot,y\right)$ we denote the density of a Gaussian random variable with mean $y\in\mathbb{R}$ and variance $t>0$:
\begin{equation}
p\left(t,x,y\right):=\frac{1}{\sqrt{2\pi t}}\exp\left[-\frac{\left(x-y\right)^{2}}{2t}\right],\,x\in\mathbb{R}.\label{eq:GaussDens}
\end{equation}
For later use we also introduce the functions $\varphi_t$ ($t>0$): 
\begin{equation}\label{eq:cond-dens}
\varphi_{t}\left(r,x\right):=\begin{cases}
p\left(\frac{t\left(r-t\right)}{r},x,0\right), & 0<t<r,\ x\in\mathbb{R},\\
0, & r\leq t,\ x\in\mathbb{R}.
\end{cases}
\end{equation}
We notice that for $0<t<r$ the conditional density of $\beta_{t}$, conditional on $\tau=r$, is just
equal to the density $\varphi_{t}\left(r,\cdot\right)$ of a standard
Brownian bridge $\beta^{r}_t$ of length $r$ at time $t$.

We proceed with the property that the default time $\tau$ is nonanticipating with respect to the filtration $\mb F^\bt$ and the Markov property of the information process $\bt$. 

\begin{lem}
\label{lem:For-all-,}For all $t>0$, $\left\{ \beta_{t}=0\right\} =\left\{ \tau\leq t\right\} ,\:\mathbf{P}$-a.s.
In particular, $\tau$ is an $\mathbb{F}^{\beta}$-stopping time.\end{lem}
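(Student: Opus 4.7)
The plan is to establish the set equality by splitting according to whether $\tau \leq t$ or $\tau > t$, using the explicit formula \eqref{eq:info-proc} defining the information process and the conditional Gaussian law of $\beta_t$ given $\tau$.

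First, I would check the inclusion $\{\tau \leq t\} \subseteq \{\beta_t = 0\}$ deterministically (not just a.s.). On $\{\tau \leq t\}$ we have $\tau \vee t = t$, so
\[
\beta_t = W_t - \frac{t}{t}\,W_t = 0,
\]
directly from \eqref{eq:info-proc}. This inclusion is therefore true pointwise on $\Omega$.

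Next, I would show the reverse inclusion $\{\beta_t = 0\} \subseteq \{\tau \leq t\}$ up to a $\mathbf{P}$-null set by proving $\mathbf{P}(\beta_t = 0,\ \tau > t) = 0$. On $\{\tau > t\}$, \eqref{eq:info-proc} gives $\beta_t = W_t - (t/\tau)\,W_\tau$. Using Assumption~\ref{ass:Assumption} (the independence of $\tau$ and $W$) and the remark recalled before the statement, the conditional law of $\beta_t$ given $\tau = r$ for $r > t$ is $\mathcal{N}\!\left(0,\tfrac{t(r-t)}{r}\right)$, a Gaussian law with strictly positive variance. Hence $\mathbf{P}(\beta_t = 0\mid \tau = r) = 0$ for every $r > t$, and integrating against the law of $\tau$,
\[
\mathbf{P}(\beta_t = 0,\ \tau > t) = \int_{(t,\infty)} \mathbf{P}(\beta_t = 0 \mid \tau = r)\,\mathbf{P}(\tau \in \rmd r) = 0.
\]
Combined with the first step, this yields $\{\beta_t = 0\} = \{\tau \leq t\}$ $\mathbf{P}$-a.s.

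Finally, for the stopping time claim, since $\beta_t$ is $\mathcal{F}_t^\beta$-measurable, the set $\{\beta_t = 0\}$ belongs to $\mathcal{F}_t^\beta$; the $\mathbf{P}$-a.s. equality with $\{\tau \leq t\}$ together with the completeness of $\mathbb{F}^\beta$ (i.e.\ $\mathcal{N}_P \subseteq \mathcal{F}_t^\beta$) gives $\{\tau \leq t\} \in \mathcal{F}_t^\beta$ for every $t \geq 0$, so $\tau$ is an $\mathbb{F}^\beta$-stopping time. No real obstacle is anticipated; the only subtle point is that the reverse inclusion holds only $\mathbf{P}$-a.s., which is why completeness of $\mathbb{F}^\beta$ is invoked in the last step.
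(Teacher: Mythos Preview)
Your argument is correct and is the natural direct proof. Note, however, that the paper itself does not supply a proof of this lemma: it simply cites \cite{key-3}, Proposition~3.1 and Corollary~3.1. Your computation---the deterministic inclusion $\{\tau\le t\}\subseteq\{\beta_t=0\}$ from \eqref{eq:info-proc}, the vanishing of $\mathbf{P}(\beta_t=0,\ \tau>t)$ via the nondegenerate conditional Gaussian law, and the stopping time conclusion via completeness of $\mathbb{F}^\beta$---is exactly the standard route and is presumably what is carried out in the cited reference.
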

\begin{proof}
See \cite{key-3}, Proposition 3.1 and Corollary 3.1.
\end{proof}
\begin{thm}\label{MP}
The information process $\beta$ is a Markov process with respect to the filtration $\mb F^{\beta}$: For all $0\le t<u$ and measurable real functions $g$ such that $g(\bt_u)$ is integrable,
$$
\mbf E[g(\bt_u)|\mc F^\bt_t]=\mbf E[g(\bt_u)|\bt_t],\quad \mbf P\mbox{-a.s.}
$$
\end{thm}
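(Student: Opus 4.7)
The strategy is to reduce the Markov property of $\bt$ to that of the standard Brownian bridge by conditioning on the random length $\tau$. Since, conditional on $\tau = r$, the process $\bt$ is a standard Brownian bridge of length $r$, which is a Markov process with explicit Gaussian transition kernels, the whole Markov property reduces to understanding the $\mc F^\bt_t$-conditional distribution of $\tau$. The latter is obtained via Bayes' formula, and the key point is that it depends on $\mc F^\bt_t$ only through $\bt_t$.

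More precisely, by a monotone class argument I may assume $g$ bounded Borel. Using Lemma \ref{lem:For-all-,} (which gives $\{\tau \le t\} = \{\bt_t = 0\}$ a.s.) together with the fact that $\bt_u = 0$ whenever $u \ge \tau$, I split
$$
\mbf E[g(\bt_u)\mid \mc F^\bt_t] \;=\; g(0)\,\mathbf{1}_{\{\bt_t=0\}} \;+\; \mbf E\!\bigl[g(\bt_u)\,\mathbf{1}_{\{\tau > t\}}\,\big|\, \mc F^\bt_t\bigr],
$$
so the first summand is already $\sigma(\bt_t)$-measurable and only the second must be analyzed. To treat it, I would use that conditioning Brownian motion on $W_r = 0$ gives, by the Markov property of $W$, a Radon--Nikodym derivative of $p(r{-}t, W_t, 0)/p(r, 0, 0)$ on $\mc F^W_t$; this identifies the law of $\bt|_{[0,t]}$ conditional on $\tau = r > t$ and, by Bayes, yields the $\mc F^\bt_t$-conditional law of $\tau$ on $\{\tau > t\}$ as
$$
\mbf P(\tau \in dr \mid \mc F^\bt_t) \;=\; \frac{p(r-t,\bt_t,0)/p(r,0,0)}{\displaystyle\int_t^\infty p(v-t,\bt_t,0)/p(v,0,0)\, F(dv)}\; F(dr), \qquad r > t,
$$
which is a Borel function of $\bt_t$. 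Combining this with the bridge transition kernel $\bt_u \mid (\bt_t, \tau = r) \sim \mc N(\bt_t(r-u)/(r-t),\, (u-t)(r-u)/(r-t))$ for $t < u < r$ (and $\bt_u = 0$ for $r \le u$) via conditional Fubini expresses the second summand as an explicit Borel function of $\bt_t$, which proves the Markov property.

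The main obstacle I anticipate is the rigorous justification of the Bayes step, which needs an argument at the level of path laws: I have to show that the law of $\bt|_{[0,t]}$ conditional on $\tau = r > t$ is absolutely continuous with respect to Wiener measure on $[0,t]$ with a Radon--Nikodym derivative depending on the path only through its endpoint $\bt_t$, and I have to verify that the denominator above is almost surely finite and strictly positive on $\{\bt_t \ne 0\}$. Once this is in place, the rest is bookkeeping: an application of the classical Markov property of the standard Brownian bridge, conditioning on $\tau$, and a final integration that exhibits the conditional expectation as a Borel function of $\bt_t$ alone.
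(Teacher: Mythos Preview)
The paper does not actually prove this theorem here: its proof consists solely of the reference ``See Theorem 6.1 in \cite{key-3}''. Your proposal is therefore not competing with an in-paper argument but supplying one, and the strategy you outline---condition on $\tau$, use the explicit Gaussian transition kernel of the standard Brownian bridge, and recover the $\mc F^\bt_t$-posterior of $\tau$ via a Bayes/RN-derivative computation showing it depends on the path only through $\bt_t$---is exactly the natural route and is what one expects the cited proof in \cite{key-3} to do. The splitting on $\{\bt_t=0\}$ via Lemma~\ref{lem:For-all-,} is the right first move, and you correctly isolate the only genuine technical step: the absolute-continuity statement for the bridge law on $[0,t]$ with respect to Wiener measure, with endpoint-measurable density $p(r-t,\cdot,0)/p(r,0,0)$.

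One remark worth recording: your posterior density of $\tau$ on $\{\tau>t\}$ and the paper's function $\phi_t(r,\bt_t)$ from \eqref{eq:densitybeta2} look formally different, but they agree. A direct computation gives $\varphi_t(r,x)=p(t,x,0)\cdot p(r-t,x,0)/p(r,0,0)$, so the factor $p(t,x,0)$, being independent of $r$, cancels in the normalized ratio and your expression coincides with the paper's $\phi_t(r,\bt_t)\,\rmd F(r)$. This also disposes of your finiteness/positivity concern on $\{\bt_t\neq0\}$: for $x\neq0$ the map $v\mapsto p(v-t,x,0)/p(v,0,0)=\sqrt{v/(v-t)}\exp\!\bigl(-x^2/(2(v-t))\bigr)$ is bounded on $(t,\infty)$ and strictly positive, and $F\bigl((t,\infty)\bigr)>0$ whenever $\{\tau>t\}$ is non-negligible. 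So the plan is sound; the only work left is the (standard) verification of the RN-derivative identity for the bridge restricted to $[0,t]$.
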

\vspace{-10pt}
\begin{proof}
See Theorem 6.1 in \cite{key-3}.
\end{proof}
As the following theorem together with Theorem \ref{MP} shows, the function $\phi_{t}$ defined by 
\begin{equation}
\phi_{t}\left(r,x\right):=\frac{\varphi_{t}\left(r,x\right)}{{\displaystyle \int_{\left(t,+\infty\right)}\varphi_{t}\left(v,x\right)\,\rmd F\left(v\right)}},\label{eq:densitybeta2}
\end{equation}
$\left(r,t\right)\in\left(0,+\infty\right)\times
\mathbb{R}_{+}$, $x\in\mathbb{R}$, is, for $t<r$, the a posteriori density function of $\tau$ on $\{\tau>t\}$, conditional on $\beta_{t}=x$. 
\begin{thm}
\label{thm:COND EXP}Let $t>0$, $g:\mathbb{R}^{+}\rightarrow\mathbb{R}$
be a Borel function such that $\mathbf{E}\left[\left|g\left(\tau\right)\right|\right]<+\infty$.
Then, $\mathbf{P}$-a.s. 
\begin{align}
\mathbf{E}\left[g\left(\tau\right)|\mathcal{F}_{t}^{\beta}\right]=g\left(\tau\right)\mathbb{I}_{\left\{ \tau\leq t\right\} }+{\displaystyle \int_{\left(t,+\infty\right)}g\left(r\right)}\,\phi_{t}\left(r,\beta_{t}\right)\,\rmd F\left(r\right)\mathbb{I}_{\left\{ t<\tau\right\} }.\label{eq:condexptau-1}
\end{align}
\end{thm}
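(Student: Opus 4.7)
I would split $g(\tau)$ as
$$g(\tau)=g(\tau)\mathbb{I}_{\{\tau\le t\}}+g(\tau)\mathbb{I}_{\{\tau>t\}}.$$
Since $\tau$ is an $\mb F^{\bt}$-stopping time by Lemma \ref{lem:For-all-,}, $\tau\wedge t$ is $\mc F^{\bt}_{t}$-measurable, and hence so is $g(\tau)\mathbb{I}_{\{\tau\le t\}}=g(\tau\wedge t)\mathbb{I}_{\{\tau\le t\}}$. This term therefore passes unchanged through the conditional expectation and matches the first summand on the right-hand side of \eqref{eq:condexptau-1}.

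For the remaining term, the random variable $\mathbb{I}_{\{\tau>t\}}\int_{(t,\infty)}g(r)\phi_{t}(r,\bt_{t})\,dF(r)$ is $\mc F^{\bt}_{t}$-measurable (it is a Borel function of $\bt_{t}$, because $\{\tau>t\}=\{\bt_{t}\ne 0\}$ P-a.s.\ by Lemma \ref{lem:For-all-,}). It thus suffices to verify, for each $n\ge 1$, all $0\le t_{1}<\cdots<t_{n}=t$, and every bounded Borel $h:\mathbb{R}^{n}\to\mathbb{R}$, the identity
\begin{equation*}
\mbf E\bigl[g(\tau)\mathbb{I}_{\{\tau>t\}}h(\bt_{t_{1}},\ldots,\bt_{t_{n}})\bigr]=\mbf E\Bigl[\mathbb{I}_{\{\tau>t\}}h(\bt_{t_{1}},\ldots,\bt_{t_{n}})\int_{(t,\infty)}g(r)\phi_{t}(r,\bt_{t})\,dF(r)\Bigr],
\end{equation*}
because such cylindrical random variables generate $\mc F^{\bt}_{t}$ by a monotone-class argument.

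The core computation would use Assumption \ref{ass:Assumption} together with the Remark: conditional on $\tau=r$, the process $\bt$ has the law of the standard Brownian bridge $\bt^{r}$ of length $r$. So both sides are integrals over $r\in(t,\infty)$ against $F$. I would then invoke the classical Brownian-bridge decomposition — for $r>t$, conditional on $\bt^{r}_{t}=x$, the trajectory $(\bt^{r}_{s})_{0\le s\le t}$ is a Brownian bridge from $0$ to $x$ on $[0,t]$, whose law does \emph{not} depend on $r$ — to obtain
$$\mbf E\bigl[h(\bt^{r}_{t_{1}},\ldots,\bt^{r}_{t_{n}})\bigr]=\int_{\mathbb{R}}H(x)\,\varphi_{t}(r,x)\,dx,\qquad r>t,$$
where $H(x)$ is the $r$-independent expectation of $h(\cdot,\ldots,\cdot,x)$ taken under the law of a Brownian bridge from $0$ to $x$ on $[0,t]$. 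Applying Fubini (legitimate thanks to $\mbf E|g(\tau)|<\infty$), the left-hand side reduces to $\int_{\mathbb{R}}H(x)\int_{(t,\infty)}g(r)\varphi_{t}(r,x)\,dF(r)\,dx$; the right-hand side yields, after the same conditioning,
$$\int_{\mathbb{R}}H(x)\Bigl(\int_{(t,\infty)}\varphi_{t}(r,x)\,dF(r)\Bigr)\Bigl(\int_{(t,\infty)}g(v)\phi_{t}(v,x)\,dF(v)\Bigr)dx.$$
The definition \eqref{eq:densitybeta2} of $\phi_{t}$ makes the denominator of $\phi_{t}(v,x)$ cancel against the first factor, reducing this expression to $\int_{\mathbb{R}}H(x)\int_{(t,\infty)}g(v)\varphi_{t}(v,x)\,dF(v)\,dx$, which matches the left-hand side.

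The principal obstacle is the bridge-decomposition step that extracts $H(x)$ as a factor independent of $r$; it has to be applied in a way that remains clean on the singular direction $r\downarrow t$, which is why separating the cases $\{\tau\le t\}$ and $\{\tau>t\}$ from the outset is indispensable — on $\{\tau>t\}$ we integrate only over $r>t$, where $\varphi_{t}(r,\cdot)$ is an honest Gaussian density. The remainder is bookkeeping with Fubini and the algebraic definition of $\phi_{t}$.
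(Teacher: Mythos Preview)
The paper does not give its own proof of this statement; it merely cites Theorem~4.1, Corollary~4.1 and Corollary~6.1 of \cite{key-3}. Your argument is correct and is precisely the kind of direct Bayes-formula computation one expects to find there: splitting on $\{\tau\le t\}$ versus $\{\tau>t\}$, reducing to cylindrical test functions, conditioning first on $\tau=r$ (Assumption~\ref{ass:Assumption} and the Remark after Definition~2.2), and then exploiting the classical fact that for $r>t$ the conditional law of $(\beta^{r}_{s})_{0\le s\le t}$ given $\beta^{r}_{t}=x$ is the Brownian bridge from $0$ to $x$ on $[0,t]$, hence independent of $r$. The cancellation of the denominator in $\phi_t$ against $\int_{(t,\infty)}\varphi_t(r,x)\,\rmd F(r)$ is exactly the algebraic heart of the Bayes formula, and your Fubini justifications are adequate since $h$ is bounded and $\mathbf{E}|g(\tau)|<\infty$.

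One minor point worth tightening: you assert that $g(\tau)\mathbb{I}_{\{\tau\le t\}}$ is $\mathcal{F}^\beta_t$-measurable via $\tau\wedge t$. This is fine for Borel $g$, but strictly speaking you are using that $\tau\mathbb{I}_{\{\tau\le t\}}$ (not just $\tau\wedge t$) is $\mathcal{F}^\beta_t$-measurable, which follows because $\{\tau\le s\}\in\mathcal{F}^\beta_s\subseteq\mathcal{F}^\beta_t$ for all $s\le t$. Also, your monotone-class reduction tacitly uses that the completed $\sigma$-algebra $\mathcal{F}^\beta_t$ differs from $\sigma(\beta_s,\,s\le t)$ only by null sets, so testing against cylindrical $h$ suffices; this is harmless but could be stated.
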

\begin{proof}
See Theorem 4.1, Corollary 4.1 and Corollary 6.1 in \cite{key-3}.
\end{proof}
Before stating the next result which is concerned with the semimartingale decomposition
of the information process, let us give the following definition:
\begin{defn}\label{stoppedBM}
{\em
Let $B$ be a continuous process, $\mathbb{F}$ a filtration and $T$
an $\mathbb{F}$-stopping time. Then $B$ is called an $\mathbb{F}$-Brownian motion stopped at $T$ if $B$ is an $\mathbb{F}$-martingale with
square variation process $\langle B,B\rangle_{t}=t\wedge T$, $t\geq0$.}
\end{defn}
Now we introduce the real-valued function $u$ defined by
\begin{equation}
u\left(s,x\right):=\mathbf{E}\Big[\frac{\beta_{s}}{\tau-s}\mathbb{I}_{\left\{ s<\tau\right\} }\big|\beta_{s}=x\Big],\quad s\in\mathbb{R}_{+},\;x\in\mathbb{R}. \label{eq:u}
\end{equation}
\begin{thm}
\label{thm:beta semimartingaale} The process $b$ defined by 
$$
b_{t}:=\beta_{t}+\int_{0}^{t}u(s,\bt_s)\,\rmd s,\quad t\geq0\,,
$$
is an $\mathbb{F}^{\beta}$-Brownian motion stopped at $\tau$. The
information process $\beta$ is therefore an $\mathbb{F}^{\beta}$-semimartingale
with decomposition
\begin{equation}\label{eq:semimart-dec-1}
\beta_{t}=b_{t}-\int_{0}^{t\wedge\tau}u\left(s,\beta_{s}\right)\,\rmd s,\quad t\geq 0\,.
\end{equation}
\end{thm}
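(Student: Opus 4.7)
The plan is to verify the characterization in Definition \ref{stoppedBM}: it suffices to show that $b$ is a continuous $\mathbb{F}^{\beta}$-martingale with $\langle b,b\rangle_{t}=t\wedge\tau$.

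\emph{Preliminary reduction.} First I would observe that $u(s,0)=0$: on $\{\beta_{s}=0\}=\{\tau\leq s\}$ (Lemma \ref{lem:For-all-,}) the indicator $\mathbb{I}_{\{s<\tau\}}$ already vanishes. Since $\beta_{s}=0$ on $\{s\geq\tau\}$, the integrand $u(s,\beta_{s})$ vanishes there, so $\int_{0}^{t}u(s,\beta_{s})\,\rmd s=\int_{0}^{t\wedge\tau}u(s,\beta_{s})\,\rmd s$, and the two expressions for $b$ indeed agree. Continuity of $b$ and $\mathbb{F}^{\beta}$-adaptedness then follow at once from continuity of $\beta$ and Borel-measurability of $u$, granted that $s\mapsto u(s,\beta_{s})$ is integrable, which can be checked from the explicit formula derivable via Theorem \ref{thm:COND EXP}.

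\emph{Martingale property (main step).} I would introduce the initial enlargement $\mathcal{G}_{t}:=\mathcal{F}^{\beta}_{t}\vee\sigma(\tau)$. Conditionally on $\tau=r$, $\beta$ has the law of a standard Brownian bridge $\beta^{r}$ of length $r$, so the classical bridge decomposition yields that
\[
\tilde W_{t}:=\beta_{t}+\int_{0}^{t\wedge\tau}\frac{\beta_{s}}{\tau-s}\,\rmd s
\]
is, conditionally on $\tau=r$, a Brownian motion stopped at $r$ in the natural filtration of $\beta^{r}$ (which coincides with $\mathbb{F}^{\beta}$). Combined with Assumption \ref{ass:Assumption}, a Fubini-on-the-conditional-law argument then upgrades this to: $\tilde W$ is a $\mathcal{G}$-Brownian motion stopped at $\tau$. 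Next, the $\mathcal{F}^{\beta}_{s}$-measurability of $\beta_{s}$, Theorem \ref{MP}, and Theorem \ref{thm:COND EXP} (applied with $g(r)=(r-s)^{-1}\mathbb{I}_{\{s<r\}}$) together give the identification
\[
u(s,\beta_{s})=\mathbf{E}\bigl[\tfrac{\beta_{s}}{\tau-s}\mathbb{I}_{\{s<\tau\}}\,\bigm|\,\mathcal{F}^{\beta}_{s}\bigr]\quad\text{a.s.}
\]
By Fubini and the tower property, for $0\leq t<v$ we obtain $\mathbf{E}[b_{v}-b_{t}\mid\mathcal{F}^{\beta}_{t}]=\mathbf{E}[\tilde W_{v}-\tilde W_{t}\mid\mathcal{F}^{\beta}_{t}]$, which vanishes since $\mathcal{F}^{\beta}_{t}\subset\mathcal{G}_{t}$ and $\tilde W$ is a $\mathcal{G}$-martingale. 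Thus $b$ is an $\mathbb{F}^{\beta}$-martingale.

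\emph{Quadratic variation and conclusion.} Since $b-\beta$ is absolutely continuous, $\langle b,b\rangle=\langle\beta,\beta\rangle$. Conditionally on $\tau=r$, $\beta$ is a Brownian bridge of length $r$ extended by zero after $r$, whose quadratic variation equals $t\wedge r$; hence $\langle\beta,\beta\rangle_{t}=t\wedge\tau$. Definition \ref{stoppedBM} then delivers that $b$ is an $\mathbb{F}^{\beta}$-Brownian motion stopped at $\tau$, and the semimartingale decomposition (\ref{eq:semimart-dec-1}) is the rewriting $\beta_{t}=b_{t}-\int_{0}^{t\wedge\tau}u(s,\beta_{s})\,\rmd s$. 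The principal obstacle is making rigorous the $\mathcal{G}$-Brownian motion property of $\tilde W$: the bridge decomposition is classical in the filtration generated by a deterministic-length bridge, but transferring it to the initial enlargement $\mathcal{G}$ requires the use of regular conditional distributions together with the independence of $\tau$ and $W$ (Assumption \ref{ass:Assumption}).
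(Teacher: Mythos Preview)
The paper does not give a self-contained proof of this statement: it simply refers to Theorem~7.1 of \cite{key-3}. Hence a line-by-line comparison is not possible from the text at hand. That said, your outline is the standard ``innovations'' or filtering argument and is essentially what one expects the cited proof to do: work first in the initially enlarged filtration $\mathcal{G}_{t}=\mathcal{F}^{\beta}_{t}\vee\sigma(\tau)$, where the classical bridge decomposition gives a $\mathcal{G}$-Brownian motion stopped at $\tau$, then project the drift onto $\mathcal{F}^{\beta}_{s}$ using Theorem~\ref{thm:COND EXP} to identify $u(s,\beta_{s})$, and conclude via the tower property. Your identification of the quadratic variation and the reduction $u(s,0)=0$ are correct.

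Two points deserve a little more care than you indicate. First, the passage ``the natural filtration of $\beta^{r}$ (which coincides with $\mathbb{F}^{\beta}$)'' is imprecise: what you actually need is that for $A\in\mathcal{F}^{\beta}_{s}$ one can write $\mathbb{I}_{A}=\Phi(\beta_{u},\,u\le s)$ for a suitable measurable functional, so that under the regular conditional law given $\tau=r$ the event becomes $\mathcal{F}^{\beta^{r}}_{s}$-measurable; this is where Assumption~\ref{ass:Assumption} and a regular conditional distribution are genuinely used, and you rightly flag it as the principal obstacle. Second, the Fubini step requires $\mathbf{E}\int_{0}^{t}\bigl|\tfrac{\beta_{s}}{\tau-s}\bigr|\mathbb{I}_{\{s<\tau\}}\,\rmd s<\infty$; this follows from $\mathbf{E}\bigl[|\beta^{r}_{s}|\bigr]=c\sqrt{s(r-s)/r}$ and integrating first in $s$ and then against $F$, but it should be stated rather than subsumed under ``can be checked''. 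With these two items made explicit, your argument is complete and almost certainly in the same spirit as the proof in \cite{key-3}.
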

\begin{proof}
See Theorem 7.1 in \cite{key-3}.
\end{proof}
\begin{rem}
The quadratic variation of the information process $\bt$ is given by
\begin{equation}\label{eq: beta beta t}
\left\langle \beta,\beta\right\rangle _{t}=\left\langle b,b\right\rangle _{t}=t\wedge\tau, \quad t\geq 0\,.
\end{equation}
\end{rem}

\section{\label{sec:The-Compensator-of}The Compensator of the Default Time}

\noindent In this section we compute explicitly the compensator of
the single-jump process with jump occurring at $\tau$, which will be denoted
by $H$=$\left(H_{t},\,t\geq0\right)$: 
\begin{equation}
H_{t}:=\mathbb{I}_{\left\{ \tau\leq t\right\} },\quad t\geq0.\label{eq:H}
\end{equation}
The process $H$, called default process, is an $\mathbb{F}^{\beta}$-submartingale and its
compensator is also known as the compensator of the
$\mathbb{F}^{\beta}$-stopping time $\tau$. Our main goal consists in providing a representation of the compensator of $H$. As we shall see below, this representation involves the local time $L^\bt(t,0)$ of the information process $\bt$ (see Appendix A for properties of local times of continuous semimartingales and, in particular, of $\bt$). From its representation we immediately obtain that the compensator of the default process $H$ is continuous. As a result, from the continuity
of the compensator of $H$ it follows that the default time $\tau$ is a totally inaccessible $\mb F^\bt$-stopping time. 

In this section the following assumption will always be in force.
\begin{assumption}
\label{assu:The-distribution-function}
{\rm(i)} The distribution function $F$ of $\tau$ admits a continuous density function $f$ with respect to the Lebesgue measure $\lambda_+$ on $\mb R_+$.

{\rm(ii)} $F(t)<1$ for all $t\geq 0$.
\end{assumption}

The following theorem is the main result of this paper:
\begin{thm}\label{thm:MAIN}
Suppose that Assumption \ref{assu:The-distribution-function}
is satisfied.\\
\hspace*{8pt}{\rm (i)} \ The process $K=\left(K_{t},\,t\geq0\right)$
defined by 
\begin{equation}
K_{t}:=\int_{0}^{t\wedge\tau}\frac{f\left(s\right)}{\int_s^\infty\varphi_{s}\left(v,0\right)f\left(v\right)dv}\,\rmd L^\bt(s,0),\quad t\geq 0\,,\label{eq:K}
\end{equation}
is the compensator of the default process $H$. Here $L^\bt(t,x)$ denotes the local time of the information process $\bt$ up to $t$ at level $x$.\\
\hspace*{8pt}{\rm (ii)} \ The default time $\tau$ is a totally inaccessible stopping time with respect to the filtration $\mathbb{F}^{\beta}$.
 \end{thm}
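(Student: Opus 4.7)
My plan is to apply the Laplacian approach of P.-A. Meyer (Appendix~\ref{sec:The-Meyer-Approach}) to the bounded right-continuous potential $Z_t:=1-H_t=\mathbb{I}_{\{\tau>t\}}$. Since $\mathbf{E}[Z_t]=1-F(t)\to 0$ as $t\to\infty$, the process $Z$ is of class~(D), and Meyer's theorem identifies the compensator of $H$ as the weak limit in $\sigma(L^1,L^\infty)$ of the $h$-Laplacians
\[
K^{(h)}_t:=\int_0^t\frac{1}{h}\,\mathbf{E}\bigl[H_{s+h}-H_s\,\big|\,\mathcal{F}^\beta_s\bigr]\,\rmd s.
\]
The whole task therefore reduces to computing this weak limit and checking that it coincides with the process $K$ defined in~\eqref{eq:K}.

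First I would apply Theorem~\ref{thm:COND EXP} with $g=\mathbb{I}_{(s,s+h]}$ together with the definition~\eqref{eq:densitybeta2} of $\phi_s$ to obtain
\[
\mathbf{E}\bigl[H_{s+h}-H_s\,\big|\,\mathcal{F}^\beta_s\bigr]=\mathbb{I}_{\{s<\tau\}}\int_s^{s+h}\frac{\varphi_s(r,\beta_s)\,f(r)}{\int_s^\infty\varphi_s(v,\beta_s)\,f(v)\,\rmd v}\,\rmd r.
\]
Writing $\Psi(s,x):=\bigl(\int_s^\infty\varphi_s(v,x)f(v)\,\rmd v\bigr)^{-1}$, which is positive, continuous and locally bounded on $\mathbb{R}_+\times\mathbb{R}$ by Assumption~\ref{assu:The-distribution-function}, and $G_h(s,x):=h^{-1}\int_s^{s+h}\varphi_s(r,x)f(r)\,\rmd r$, this yields the compact formula
\[
K^{(h)}_t=\int_0^{t\wedge\tau}\Psi(s,\beta_s)\,G_h(s,\beta_s)\,\rmd s.
\]

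The main step is then the passage to the limit $h\downarrow 0$. Heuristically, the Gaussian density $\varphi_s(s+u,\cdot)$ has variance $su/(s+u)\to 0$, so it forms an approximate identity at the origin; combined with continuity of $f$, one expects the random measures $G_h(s,\beta_s)\,\rmd s$ to converge weakly on $[0,t\wedge\tau]$ to $f(s)\,\rmd L^\beta(s,0)$, and since $\rmd L^\beta(s,0)$ is carried by $\{\beta_s=0\}$ the factor $\Psi(s,\beta_s)$ may then be replaced by $\Psi(s,0)=\bigl(\int_s^\infty\varphi_s(v,0)f(v)\,\rmd v\bigr)^{-1}$, producing exactly the integrand of~\eqref{eq:K}. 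To make this rigorous I would test $K^{(h)}_t$ against arbitrary bounded $\mathcal{F}^\beta_\infty$-measurable random variables and, via Fubini, reduce to the deterministic statement that for every continuous bounded $G:\mathbb{R}_+\times\mathbb{R}\to\mathbb{R}$,
\[
\int_0^{t\wedge\tau}G(s,\beta_s)\,G_h(s,\beta_s)\,\rmd s\;\xrightarrow[h\downarrow 0]{}\;\int_0^{t\wedge\tau}G(s,0)\,f(s)\,\rmd L^\beta(s,0),
\]
which should follow from the occupation time formula for $\beta$ (Appendix~\ref{sec:The-Local-Time}) together with continuity of $x\mapsto L^\beta(t,x)$ at $x=0$.

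The hardest part will be precisely this limiting argument, because the smoothing kernel $\varphi_s(s+u,\cdot)$ depends on $s$ as well as on the bandwidth $u$, so one cannot directly quote a standard Gaussian-mollifier/occupation-time convergence result. One will need uniform estimates on the Gaussian kernels in the parameter $s\in[0,t]$, together with tightness/equi-integrability controls handling the boundary $s\downarrow 0$ (where $\varphi_s$ degenerates) and the stopping at $s=t\wedge\tau$ (where $\beta$ freezes at $0$ and the local time ceases to grow). Once part~(i) is established in this way, part~(ii) is immediate: $K$ is continuous, being a Lebesgue--Stieltjes integral against the continuous increasing process $L^\beta(\cdot,0)$, and an $\mathbb{F}^\beta$-stopping time whose compensator is continuous is, by standard general theory, totally inaccessible.
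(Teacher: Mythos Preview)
Your proposal is essentially the paper's own approach: Meyer's Laplacian approximation, the explicit formula for $K^{(h)}_t$ via Theorem~\ref{thm:COND EXP}, and then the occupation time formula together with continuity of $x\mapsto L^\beta(t,x)$ to identify the limit as an integral against $\rmd L^\beta(s,0)$; part~(ii) then follows from continuity of $K$. You also correctly flag the two genuine technical issues (the $s$-dependence of the kernel and the degeneracy near $s=0$), which the paper handles respectively by the explicit comparison $p\bigl(\frac{su}{s+u},x,0\bigr)$ versus $p(u,x,0)$ and by working on $[t_0,t]$ with $t_0>0$ before letting $t_0\downarrow 0$.

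One small logical wrinkle: your plan to ``test against bounded $\mathcal{F}^\beta_\infty$-measurable variables and, via Fubini, reduce to the deterministic statement'' does not quite parse---the displayed convergence you write down is pathwise (it involves $\beta$ and $\tau$), not deterministic, and Fubini on $\mathbf{E}[\eta K^{(h)}_t]$ gives integrals of expectations, not a pathwise limit. The paper's route is cleaner: prove the $\mathbf{P}$-a.s.\ convergence $K^{(h)}_t-K^{(h)}_{t_0}\to K_t-K_{t_0}$ directly, note that Meyer already guarantees $K^{(h)}_t\to K^w_t$ in $\sigma(L^1,L^\infty)$ for the true compensator $K^w$, and then invoke the elementary fact (Lemma~\ref{cor:Let--be-1}) that a sequence converging both $\mathbf{P}$-a.s.\ and weakly has the same limit. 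This is a minor reorganisation of your argument rather than a different idea.
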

\bg{proof} First we verify statement (ii) under the supposition that (i) is true. Obviously, as $L^\bt(s,0)$ is continuous in $s$ (see Lemma \ref{lem:local time x continuity}), the process $K$ given by \eqref{eq:K} is continuous. Consequently, because of the well-known equivalence between this latter property and the continuity of the compensator (see, e.g., \cite{key-15}, Corollary 25.18), we can conclude that the default time $\tau$ is a totally inaccessible stopping time with respect to $\mb F^\bt$.

Now we prove statement (i) of the theorem. For every $h>0$ we define the process $K^{h}=\left(K_{t}^{h},\,t\geq0\right)$
by
\begin{eqnarray}
K_{t}^{h}&:=&\frac{1}{h}\int_{0}^{t}\left(\mathbb{I}_{\left\{ s<\tau\right\} }-\mathbf{E}\left[\mathbb{I}_{\left\{ s+h<\tau\right\} }|\mathcal{F}_{s}^{\beta}\right]\right)\,\rmd s\nonumber\\
&=&\int_{0}^{t}\frac{1}{h}\mathbf{P}\left(s<\tau<s+h|\mathcal{F}_{s}^{\beta}\right)\,\rmd s\,,\quad\mathbf{P}\textrm{-a.s.}\label{eq:KAPPAacca-1}
\end{eqnarray}

The proof is divided into two parts. In the first part we prove that $K_t-K_{t_0}$ is the $\mathbf{P}$-a.s. limit of $K_t^{h}-K^h_{t_0}$ as $h\downarrow0$, for every $t_0,t\geq0$ such that $0<t_0<t$. In the second part of the proof we show that the process $K$ is indistinguishable from the compensator of $H$. Auxiliary results used throughout the proof are postponed to Appendix \ref{sec:Auxiliary-Results}. 

For the first part of the proof, we fix $t_0,t$ such that $0<t_0<t$ and notice that
\begin{align}
K_{t}^{h}-K_{t_0}^{h} & =\int_{t_0}^{t}\frac{1}{h}\mathbf{P}\left(s<\tau<s+h|\mathcal{F}_{s}^{\beta}\right)\,\rmd s\nonumber\\
 & =\int_{t_0\wedge\tau}^{t\wedge\tau}\frac{1}{h}\left(\frac{\int_{s}^{s+h}\varphi_{s}\left(r,\beta_{s}\right)f\left(r\right)\,\rmd r}{\int_s^\infty\varphi_{s}\left(v,\beta_{s}\right)f\left(v\right)\,\rmd v}\right)\,\rmd s\label{limit_K}
\end{align}
where the last equality is a consequence of Theorem \ref{thm:COND EXP} and Definition (\ref{eq:densitybeta2}) of the a posteriori density function of $\tau$. Later we shall verify that 
\beq\label{limit_Kf}
\lim_{h\downarrow 0} \int_{t_0\wedge\tau}^{t\wedge\tau}\frac{1}{h}\left(\frac{\int_{s}^{s+h}\varphi_{s}\left(r,\beta_{s}\right)\left[f\left(r\right)-f\left(s\right)\right]\,\rmd r}{\int_s^\infty\varphi_{s}\left(v,\beta_{s}\right)f\left(v\right)\,\rmd v}\right)\,\rmd s=0\quad \mbf P\mbox{-a.s.}
\eeq
So, we have to deal with the limit behaviour as $h\downarrow 0$ of 
\beqa
\nonumber\lefteqn{\int_{t_0\wedge\tau}^{t\wedge\tau}\frac{1}{h}\left(\frac{\int_{s}^{s+h}\varphi_{s}\left(r,\beta_{s}\right)\,\rmd r}{\int_s^\infty\varphi_{s}\left(v,\beta_{s}\right)f\left(v\right)\,\rmd v}\right)\,f\left(s\right)\,\rmd s}\\
\nonumber&=&\int_{t_0\wedge\tau}^{t\wedge\tau}\frac{1}{h}\left(\frac{\int_{0}^{h}\varphi_{s}\left(s+u,\beta_{s}\right)\,\rmd u}{\int_s^\infty\varphi_{s}\left(v,\beta_{s}\right)f\left(v\right)\,\rmd v}\right)\,f\left(s\right)\rmd s\\
\label{limit_K-1}&=&\int_{t_0\wedge\tau}^{t\wedge\tau}\frac{1}{h}\int_{0}^{h}p\left(\frac{su}{s+u},\beta_{s},0\right)\,\rmd u\, g(s,\bt_s)\,f\left(s\right)\,\rmd s
\eeqa
where we have introduced the function $g:\left(0,+\infty\right)\times\mathbb{R}\rightarrow\mathbb{R}_{+}$ by
\begin{equation}
g\left(s,x\right):=\Big(\int_s^\infty\varphi_{s}\left(v,x\right)f\left(v\right)\,\rmd v\Big)^{-1},\quad s>0,\,x\in\mathbb{R}\,.\label{eq:g}
\end{equation}
In \eqref{limit_K-1}, we want to replace $p\left(\frac{su}{s+u},\beta_{s},0\right)$ by $p(u,\bt_s,0)$. To this end, we estimate
\beqa
\nonumber\lefteqn{\Big\vert p\Big(\frac{su}{s+u},x,0\Big)-p(u,x,0)\Big\vert}\\
\nonumber\hspace*{20pt}&=&p(u,x,0)\,\Big\vert\left(\frac{s+u}{s}\right)^\frac{1}{2} \exp\Big(-\frac{x^2}{2s}\Big)-1\Big\vert\\
\nonumber\hspace*{20pt}&\le&p(u,x,0)\Bigg[\left(\frac{s+u}{s}\right)^\frac{1}{2} \Big\vert\exp\Big(-\frac{x^2}{2s}\Big)-1\Big\vert
+\Big\vert\left(\frac{s+u}{s}\right)^\frac{1}{2} -1\Big\vert\Bigg]\\
\nonumber\hspace*{20pt}&\le&\left((2\pi)^\frac{1}{2}|x|\right)^{-1}\exp\left(-\frac{1}{2}\right)\left(\frac{s+1}{s}\right)^\frac{1}{2}\Big( \frac{x^2}{2s}\Big)+(2\pi u)^{-\frac{1}{2}} \left(\frac{u}{2s}\right)\\
\hspace*{20pt}&\le&c_1|x|+c_2\,u^\frac{1}{2}\,,\label{Ineq1}
\eeqa
with some constants $c_1$ and $c_2$, for $0\le u\le h\le1$ and $s\in[t_0,t]$, where for the estimate of the first summand we have used that the function $u\mapsto p(u,x,0)$ has its unique maximum at $u=x^2$, the standard estimate $1-e^{-z}\leq z$ for all $z\geq 0$ and that $(s+1)s^{-1}\le 1+t_0^{-1}$ as well as for the estimate of the second summand the inequalities $p(u,x,0)\le(2\pi u)^{-\frac{1}{2}}$ and $|\sqrt{\frac{s+u}{s}}-1|\le \frac{u}{2s}$. Putting $x=\bt_s$ and integrating from $0$ to $h$ and dividing by $h$, for $0\le u\le h\le1$ and $s\in[t_0,t]$, from \eqref{Ineq1} we obtain
\beqas
\lefteqn{\frac{1}{h}\int_{0}^{h}\Big\vert p\big(\frac{su}{s+u},\beta_{s},0\big) -p\left(u,\beta_{s},0\right)\Big\vert\,\rmd u\,  g(s,\bt_s)\,f\left(s\right)}\\
&\le&\big(c_1|\bt_s|+c_2\,h^\frac{1}{2}\big)\,g(s,\bt_s)\,f\left(s\right)\\
&\le&\big(c_1|\bt_s|+c_2\big)\,c_3\,C(t_0,t,\bt_s)
\eeqas
where $C(t_0,t,x)$ is an upper bound of $g(s,x)$ on $[t_0,t]$ continuous in $x$ (see Lemma \ref{lem:Lemma3}) and $c_3$ is an upper bound for the continuous density function $f$ on $[t_0,t]$. The right hand side is integrable over $[t_0,t]$ with respect to the Lebesgue measure $\lm_+$. On the other side, by the fundamental theorem of calculus we have that, for every $x\not=0$, 
\beq\label{q-density}
\lim_{h\downarrow 0}\frac{1}{h}\int_0^h p(u,x,0)\,\rmd u=0,\quad
\lim_{h\downarrow 0}\frac{1}{h}\int_0^h p\Big(\frac{su}{s+u},x,0\Big)\,\rmd u=0\,.
\eeq
For this we notice that $p(0,x,0)=0$ is a continuous extension of the function $u\mapsto p(u,x,0)$ if $x\not=0$. By Corollary \ref{lem:occupation-time2}, we have that the set $\{0\le s\le t\wedge \tau: \ \bt_s=0\}$ has Lebesgue measure zero. Then using Lebesgue's theorem on dominated convergence, we can conclude that $\mbf P$-a.s.
\beq\label{Comparison}
\lim_{h\downarrow 0}\int_{t_0\wedge\tau}^{t\wedge\tau}\frac{1}{h}\int_{0}^{h}\Big\vert p\big(\frac{su}{s+u},\beta_{s},0\big)-p(u,\bt_s,0)\Big\vert\,\rmd u\, g(s,\bt_s)\,f\left(s\right)\,\rmd s=0\,.
\eeq
This finishes the first step of the proof of the first part saying that in \eqref{limit_K-1} we may replace $p\left(\frac{su}{s+u},\beta_{s},0\right)$ by $p(u,\bt_s,0)$ for identifying the limit. 

The second step of the first part is to prove that 
\beqas
\lim_{h\downarrow 0} \int_{t_0\wedge\tau}^{t\wedge\tau}\frac{1}{h}\int_{0}^{h}p\left(u,\beta_{s},0\right)\,\rmd u\, g(s,\bt_s)\,f\left(s\right)\,\rmd s=K_t-K_{t_0}\,,\quad \mbf P\mbox{-a.s.}
\eeqas
Setting 
\beq\label{density-q}
q(h,x):=\frac{1}{h}\int_0^h p(u,x,0)\,\rmd u,\quad 0<h\le 1, \ x\in\mb R\,,
\eeq
an application of the occupation time formula (see Corollary \ref{lem:occupation-time2}) yields
\beqa\label{AppOTF}
\nonumber\lefteqn{\int_{t_0\wedge\tau}^{t\wedge\tau}\frac{1}{h}\int_{0}^{h}p\left(u,\beta_{s},0\right)\,\rmd u\, g(s,\bt_s)\,f\left(s\right)\,\rmd s}\\
\nonumber&=&\int_{t_0\wedge\tau}^{t\wedge\tau}\,q(h,\bt_s)\, g(s,\bt_s)\,f\left(s\right)\,\rmd s\\
&=&\int_{-\infty}^{+\infty}\left(\int_{t_0}^{t}g\left(s,x\right)f\left(s\right)\,\rmd L^\bt(s,x)\right)q\left(h,x\right)\,\rmd x\,,\quad
\mbf P\mbox{-a.s.}
\eeqa
For every $h>0$, $q(h,\cdot)$ is a probability density function with respect to the Lebesgue measure on $\mb R$. According to Lemma \ref{q-weak convergence}, the probability measures $\mbf Q_h$ with density $q(h,\cdot)$ converge weakly to the Dirac measure $\delta_0$ at $0$. On the other hand, Lemma \ref{lem:Lemma4} shows that the function $x\mapsto \int_{t_0}^{t}g\left(s,x\right)f\left(s\right)\,\rmd L^\bt(s,x)$ is continuous and bounded. Hence, in \eqref{AppOTF} we can pass to the limit and obtain
\beqa\label{Result-Step-2}
\nonumber\lefteqn{\lim_{h\downarrow0}\int_{t_0\wedge\tau}^{t\wedge\tau}\frac{1}{h}\int_{0}^{h}p\left(u,\beta_{s},0\right)\,\rmd u\, g(s,\bt_s)\,f\left(s\right)\,\rmd s}\\
&=&\int_{t_0}^{t}g\left(s,0\right)f\left(s\right)\,\rmd L^\bt(s,0),\quad
\mbf P\mbox{-a.s.}
\eeqa

In the third step of the proof of the first part we have to show that \eqref{limit_Kf} holds. Note that the function $f$ is uniformly continuous on $[t_0,t+1]$. We fix $\ep>0$ and choose $0<\delta\le1$ such that $|f(s+u)-f(s)|\le\ep$ for every $0\le u<\delta$. Proceeding similarly as above, we obtain
\beqas
\lefteqn{\limsup_{h\downarrow 0} \Big\vert\int_{t_0\wedge\tau}^{t\wedge\tau}\frac{1}{h}\left(\frac{\int_{s}^{s+h}\varphi_{s}\left(r,\beta_{s}\right)\left[f\left(r\right)-f\left(s\right)\right]\,\rmd r}{\int_s^\infty\varphi_{s}\left(v,\beta_{s}\right)f\left(v\right)\,\rmd v}\right)\,\rmd s\Big\vert}\\
&\hspace*{-2pt}\le&\hspace{-6pt}\limsup_{h\downarrow 0}\int_{t_0\wedge\tau}^{t\wedge\tau}\frac{1}{h}\int_{0}^{h}p\left(\frac{su}{s+u},\beta_{s},0\right)\,\big|f(s+u)-f(s)\big|\,\rmd u\, g(s,\bt_s)\,\rmd s\\
&\hspace*{-2pt}\le&\hspace{-6pt}\ep\,\limsup_{h\downarrow 0}\int_{t_0\wedge\tau}^{t\wedge\tau}\frac{1}{h}\int_{0}^{h}p\left(\frac{su}{s+u},\beta_{s},0\right)\,\rmd u\, g(s,\bt_s)\,\rmd s\\
&\hspace*{-2pt}=&\hspace{-6pt}\ep\,\limsup_{h\downarrow 0}\int_{t_0\wedge\tau}^{t\wedge\tau}\frac{1}{h}\int_{0}^{h}p\left(u,\beta_{s},0\right)\,\rmd u\, g(s,\bt_s)\,\rmd s\\
&\hspace*{-2pt}=&\hspace{-6pt}\ep\,\int_{t_0}^{t}g\left(s,0\right)\,\rmd L^\bt(s,0),\quad
\mbf P\mbox{-a.s.}
\eeqas
Since $\ep>0$ is choosen arbitrarily and the integral above is $\mbf P$-a.s. finite, we can conclude that \eqref{limit_Kf} holds. 

The first part of the proof is completed.

The second part of the proof relies on the so-called Laplacian approach
of P.-A. Meyer and, for the sake of easy reference, related results
are recalled in Appendix \ref{sec:The-Meyer-Approach}. Let us denote by $K^{w}$ the compensator of the default process $H$ introduced in \eqref{eq:H}: $H_{t}:=\mathbb{I}_{\left\{ \tau\leq t\right\}},\ t\geq0$. We first show that $K^{h}_t$ converges to $K^{w}_t$ as $h\downarrow0$ in the sense of the weak topology $\sigma\left(L^{1},L^{\infty}\right)$ (see Definition \ref{def:weak L1 Linf conv}), for every $t\geq0$. We then prove that the process $K$ is actually indistinguishable from $K^{w}$.

For the sake of simplicity of the notation, if a sequence of integrable random variables $\left(\xi_{n}\right)_{n\in\mathbb{N}}$
converges to an integrable random variable $\xi$ in the
sense of the weak topology $\sigma\left(L^{1},L^{\infty}\right)$
we will write
\[
\xi_{n}\xrightarrow[n\rightarrow+\infty]{^{\sigma\left(L^{1},L^{\infty}\right)}}\xi.
\]
Furthermore, we will denote by $G$ the right-continuous potential of class (D) (cf. beginning of Appendix \ref{sec:The-Meyer-Approach}) given by 
\begin{equation}
G_{t}:=1-H_{t}=\mathbb{I}_{\left\{ t<\tau\right\}},\quad t\geq0\,.\label{eq:G}
\end{equation}

By Corollary \ref{thm:Kh Kw}, we know that there exists a unique integrable predictable increasing process $K^{w}=\left(K_{t}^{w},\,t\geq0\right)$
which generates, in the sense of Definition \ref{def:potential generated by A}, the potential $G$ given by (\ref{eq:G}) and, for every
$\mathbb{F}^{\beta}$-stopping time $T$, we have that 
\[
K_{T}^{h}\xrightarrow[h\downarrow0]{^{\sigma\left(L^{1},L^{\infty}\right)}}K_{T}^{w}.
\]
The process $K^{w}$ is actually the compensator of $H$. Indeed,
it is a well known fact that the process $H$ admits a unique decomposition
\begin{equation}
H=M+A\label{eq:uno-due-1}
\end{equation}
into the sum of a right-continuous martingale $M$ and an adapted,
natural, increasing, integrable process $A$. The process $A$ is
then called the compensator of $H$. On the other hand, from the definition
of the potential generated by an increasing process (see Definition \ref{def:potential generated by A})
the process 
\begin{equation}
L:=G+K^{w}\label{eq:uno-tre-1}
\end{equation}
is a martingale. By combining the definition (\ref{eq:G}) of the
process $G$ and (\ref{eq:uno-tre-1}) we obtain the following decomposition
of $H$:
\[
H=1-L+K^{w}.
\]
However, by the uniqueness of the decomposition (\ref{eq:uno-due-1}),
we can identify the martingale $M$ with $1-L$ and we have that
$A=K^{w}$,
up to indistinguishability. Since the submartingale $H$ and the martingale
$1-L$ appearing in the above proof are right-continuous, the process
$K^{w}$ is right-continuous, too.

By applying Lemma \ref{cor:Let--be-1} we see that $K_t-K_{t_0}$
is a modification of $K^{w}_t-K^w_{t_0}$, for all $t_0,t$ such that $0<t_0<t$. Passing to the limit as $t_0\downarrow 0$, we get $K_t=K_t^w$ $\mbf P$-a.s. for all $t\geq 0$. Since both processes have right-continuous sample paths they are indistinguishable. 

The theorem is proved.
\end{proof}
\begin{rem}
We close this part of the present paper with the following observations.

(1)\ Note that $\left(\mathbb{I}_{\left\{ \tau\leq t\right\} },\,t\geq0\right)$
does not admit an intensity with respect to the filtration $\mathbb{F}^{\beta}$
(hence it is not possible to apply, for example, Aven's Lemma
for computing the compensator (see, e.g. \cite{key-1}).

(2) Assumption \ref{assu:The-distribution-function}(ii) on the distribution function $F$ that $F(t)<1$ for all $t\geq0$ ensures that the denominator of the integrand of the right-hand side of \eqref{eq:K} is always strictly positive. However, it can be removed. Indeed, if the density function $f$ of $\tau$ is continuous (as required by Assumption \ref{assu:The-distribution-function}(i)), then exactly as above we can show that relation \eqref{eq:K} is satisfied for all $t\le t_1:=\sup\{t>0: \ F(t)<1\}$. On the other hand, it is obvious that $\tau\leq t_1$ $\mathbf P$-a.s. (hence the right-hand side of \eqref{eq:K} is constant for $t\in[t_1,\infty)$) and also that the compensator $K=\left(K_{t},\,t\geq0\right)$
of $\left(\mathbb{I}_{\left\{ \tau\leq t\right\} },\,t\geq0\right)$
is constant on $[t_1,\infty)$. Altogether, it follows that relation \eqref{eq:K} is satisfied for all $t\geq 0$.

\end{rem}
\begin{appendix}
\section{\label{sec:The-Local-Time} On the Local Time of the Information Process}
\noindent In this section we introduce and study the local time process associated with the information process.

For any continuous semimartingale $X=\left(X_{t},\,t\geq0\right)$
and for any real number $x$, it is possible to define the (right) local time
$L^X(t,x)$ associated with $X$ at level $x$ up to time $t$ through Tanaka's formula (see, e.g., \cite{key-18}, Theorem VI.(1.2)) as
follows: 
\begin{equation}
L^X(t,x):=|X_{t}-x|-|X_{0}-x|-\int_{0}^{t}\textrm{sign}\left(X_{s}-x\right)\,\rmd X_{s},\quad t\geq0,\label{eq:tanaka}
\end{equation}
where $\textrm{sign}\left(x\right):=1$ if $x>0$ and $\textrm{sign}\left(x\right):=-1$
if $x\leq0$. The process $L^X\left(\cdot,x\right)=\left(L^X\left(t,x\right),\,t\geq0\right)$
appearing in relation (\ref{eq:tanaka}) is called the (right) \textit{local time of $X$ at level $x$}. 

Now we recall the occupation time formula for local times of continuous semimartingales which is given in a form convenient for our applications. By $\aP{X}$ we denote the square variation process of a continuous semimartingale $X$.
\begin{lem}
\label{lem:OccTimeFormula}Let $X=\left(X_{t},\,t\geq0\right)$ be
a continuous semimartingale. There is a $\mathbf{P}$-negligible set
outside of which
\[
\int_{0}^{t}h\left(s,X_{s}\right)d\left\langle X,X\right\rangle _{s}=\int_{-\infty}^{+\infty}\left(\int_{0}^{t}h\left(s,x\right)\,\rmd L^X\left(s,x\right)\right)\,\rmd x\,,
\]
for every $t\geq0$ and every non-negative Borel function $h$ on $\mathbb{R}_{+}\times\mathbb{R}$.
\end{lem}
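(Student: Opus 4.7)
The plan is to reduce the space-time formula to the classical (space-only) occupation time formula for continuous semimartingales, and then lift it by a monotone class argument. The space-only version states that for every non-negative Borel function $k: \mb R \to \mb R_+$ there is a $\mbf{P}$-negligible set outside which
\[
\int_0^t k(X_s)\,\rmd\langle X,X\rangle_s = \int_{-\infty}^{+\infty} k(x)\,L^X(t,x)\,\rmd x
\]
holds for every $t\geq 0$. This is derived from Tanaka's formula \eqref{eq:tanaka} applied to the convex functions $x \mapsto |x-y|$, followed by an approximation of $k$ by positive linear combinations of such convex functions; the uniformity in $t$ comes from the fact that both sides are continuous and non-decreasing in $t$.

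The first step is to establish the claim for indicators of rectangles $h(s,x) = \mbf{1}_{[a,b]}(s)\,\mbf{1}_C(x)$, where $a,b \in \mb{Q}_+$ with $a \leq b$ and $C \in \mc{B}(\mb R)$. For such $h$ and any $t \geq 0$, the left-hand side reduces to $\int_{a\wedge t}^{b\wedge t}\mbf{1}_C(X_s)\,\rmd\langle X,X\rangle_s$, which a double application of the space-only formula (at times $b\wedge t$ and $a\wedge t$) expresses as $\int_C[L^X(b\wedge t,x)-L^X(a\wedge t,x)]\,\rmd x$. Since $s \mapsto L^X(s,x)$ is non-decreasing, the right-hand side $\int_{\mb R} \mbf{1}_C(x)\int_{a\wedge t}^{b\wedge t}\rmd L^X(s,x)\,\rmd x$ yields the same expression. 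Taking the countable union over $(a,b)\in\mb{Q}_+^2$ of the corresponding null sets produces a single exceptional set $N$ outside which the identity holds for every such rectangle and every $t \geq 0$.

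The second step is a functional monotone class argument performed trajectorywise on $\Omega\setminus N$. Let $\mc H$ denote the class of bounded non-negative Borel functions $h$ on $\mb R_+ \times \mb R$ for which the identity holds for every $t \geq 0$ on $\Omega\setminus N$. This class is a vector space, closed under bounded monotone increasing limits (by two applications of the monotone convergence theorem---once for the $\rmd\langle X,X\rangle_s$-integral and once for the iterated $\rmd L^X(s,x)\,\rmd x$-integral), and contains the indicators of the generating rectangles. By the functional monotone class theorem, $\mc H$ therefore contains all bounded non-negative Borel functions on $\mb R_+ \times \mb R$. A final approximation $h = \lim_n(h \wedge n)$ via monotone convergence extends the identity to all non-negative Borel $h$.

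The main obstacle I anticipate is ensuring that a single $\mbf{P}$-negligible set serves simultaneously for all $t$ and all $h$. This is precisely why the countable family of rectangles is isolated at the start: once $N$ has been chosen to handle all such rectangles simultaneously, the functional monotone class argument and the final monotone approximation operate on fixed trajectories in $\Omega\setminus N$ and so introduce no further exceptional sets.
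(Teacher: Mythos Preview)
Your approach is essentially the paper's: start from the space-only occupation time formula (Revuz--Yor, Corollary~VI.(1.6)), check the identity on rectangle indicators $\mathbb{I}_{[a,b]}(s)\gamma(x)$, and extend by a monotone class argument.

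One point to tighten. As you phrase the space-only formula, the negligible set may depend on $k$; when you then apply it with $k=\mathbf{1}_C$, the exceptional set a priori depends on $C$, not just on $(a,b)$, so the countable union over $(a,b)\in\mathbb{Q}_+^2$ does not by itself yield a single null set covering \emph{all} Borel $C$. There are two easy fixes: either invoke the stronger form of the space-only result---a single $\mathbf{P}$-null set works for all non-negative Borel $k$ simultaneously, since $L^X(t,\cdot)$ is $\mathbf{P}$-a.s.\ a density of the occupation measure---or take your generating $\pi$-system to be $\mathbb{I}_{[a,b]}(s)\,\mathbb{I}_{(c,d]}(x)$ with $a,b,c,d$ rational, which is countable and still generates $\mathcal{B}(\mathbb{R}_+\times\mathbb{R})$.
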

\begin{proof}
See Corollary VI.(1.6) of the book of Revuz and Yor \cite{key-18} for the case when $h$ is a non-negative Borel function defined on $\mathbb{R}$ (i.e., it does not depend on time). The statement of the lemma is then proved by first considering the case in which $h$ has the form
$
h\left(t,x\right)=\mathbb{I}_{\left[u,v\right]}\left(t\right)\gamma\left(x\right)
$
for $0\le u<v<\infty$ and a non-negative Borel function $\gamma$ on $\mathbb{R}$, and then using monotone class arguments (see Revuz and Yor \cite{key-18}, Exercise VI.(1.15), or Rogers and Williams \cite{key-19}, Theorem IV.(45.4)).
\end{proof}
Concerning continuity properties of local times, there is the following result.
\begin{lem}
\label{lem:Let--be}Let $X=\left(X_{t},\,t\geq0\right)$ be a continuous semimartingale with canonical decomposition given by $X=M+A$, where $M$ is a local martingale and $A$ a finite variation process. Then there exists a modification of the local time process $\left(L^X\left(t,x\right),t\geq0,\,x\in\mathbb{R}\right)$
of $X$ such that the map 
$\left(t,x\right)\mapsto L^X\left(t,x\right)$
is continuous in $t$ and \cadlag in $x$, $\mathbf{P}$-a.s.
Moreover
\beq
L^X\left(t,x\right)-L^X\left(t,x-\right) =2\int_{0}^{t}\mathbb{I}_{\{x\}}(X_s)\, \rmd A_{s},\label{eq:ii}
\eeq
for all $t\geq0,\,x\in\mathbb{R}$, $\mathbf{P}$-a.s.
\end{lem}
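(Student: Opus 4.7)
The plan is to dissect the Tanaka definition \eqref{eq:tanaka} by using the canonical decomposition $X=M+A$. Writing
$$
L^X(t,x)=|X_t-x|-|X_0-x|-N(t,x)-V(t,x),
$$
with $N(t,x):=\int_0^t\sgn(X_s-x)\,\rmd M_s$ and $V(t,x):=\int_0^t\sgn(X_s-x)\,\rmd A_s$, the two absolute value terms are jointly continuous in $(t,x)$ pathwise because $X$ has continuous trajectories. Hence the regularity question reduces to analysing $N$ and $V$ separately.

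For $V$ the argument is pathwise. Fixing $\omega$, the map $x\mapsto\sgn(X_s(\omega)-x)$ is bounded by $1$ and, under the convention $\sgn(0)=-1$, is \cadlag in $x$ with a jump of $-2$ occurring precisely at $x=X_s(\omega)$. Dominated convergence against the total variation measure $|\rmd A_s|$ on $[0,t]$ then yields that $V(t,\cdot)$ is \cadlag in $x$, that $V(\cdot,x)$ is continuous in $t$, and that
$$
V(t,x)-V(t,x-)=-2\int_0^t\mathbb{I}_{\{X_s=x\}}\,\rmd A_s\,.
$$
Once $N$ has been shown to possess a jointly continuous modification, the identity $L^X(t,x)-L^X(t,x-)=-(V(t,x)-V(t,x-))$ delivers the jump formula \eqref{eq:ii} immediately.

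The technical heart of the proof is producing a version of $N$ that is jointly continuous in $(t,x)$. By localizing at $T_n:=\inf\{t:\langle M\rangle_t\ge n\}$ I may assume $\langle M\rangle_\infty\le n$. Using the elementary pointwise identity $\sgn(X_s-x)-\sgn(X_s-y)=2\,\mathbb{I}_{(x,y]}(X_s)$ for $x<y$, the Burkholder-Davis-Gundy inequality gives, for every $p\ge 2$,
$$
\mathbf{E}\bigl[|N(t,x)-N(t,y)|^{2p}\bigr]\le C_p\,\mathbf{E}\Bigl[\Bigl(\int_0^t\mathbb{I}_{(x,y]}(X_u)\,\rmd\langle M\rangle_u\Bigr)^{p}\Bigr],
$$
and an analogous bound in the time increment. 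Kolmogorov's continuity criterion then supplies a jointly continuous modification of $N$ on compact rectangles, and letting $n\uparrow\infty$ removes the localization.

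The main obstacle is to bound the right-hand side of the BDG estimate by $|x-y|^p$ (or by $|x-y|^{p-1}$ times an integrable random variable) without circularly invoking the continuity of local times one is trying to establish. This is resolved in Revuz-Yor \cite{key-18}, Theorem VI.1.7, by a bootstrap that uses the occupation times formula (Lemma \ref{lem:OccTimeFormula}) applied at the level of $M$ itself; my plan is to appeal to that template once the decomposition above has isolated exactly the moment quantities that need to be controlled.
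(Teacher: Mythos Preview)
The paper does not give its own proof of this lemma: it simply cites Revuz--Yor \cite{key-18}, Theorem VI.(1.7). Your sketch is precisely the strategy of that theorem---decompose the Tanaka integral into the local-martingale piece $N$ and the finite-variation piece $V$, handle $V$ pathwise by dominated convergence (which produces the jump formula), and obtain joint continuity of $N$ via BDG and Kolmogorov---so your proposal and the paper's reference coincide.
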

\begin{proof}
See, e.g., \cite{key-18}, Theorem VI.(1.7). 
\end{proof}
The information process $\bt$ is a continuous
semimartingale (cf. Theorem \ref{thm:beta semimartingaale}), hence the local time $L^\bt\left(t,x\right)$ of $\beta$ at level
$x\in\mathbb{R}$ up to time $t\geq0$ is well defined. The occupation time formula takes the following form. 
\begin{cor}
\label{lem:occupation-time2} We have 
\[
\intop_{0}^{t\wedge\tau}h\left(s,\beta_{s}\right)ds=\intop_{0}^{t}h\left(s,\beta_{s}\right)d\left\langle \beta,\beta\right\rangle _{s}=\intop_{-\infty}^{+\infty}\left(\intop_{0}^{t}h\left(s,x\right)\,\rmd L^\bt\left(s,x\right)\right)\,\rmd x\,,
\]
for all $t\geq0$ and all non-negative Borel functions $h$ on $\mathbb{R}_{+}\times\mathbb{R}$, $\mbf P$-a.s.
\end{cor}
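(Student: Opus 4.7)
The plan is to combine two ingredients already established in the paper: the identification of the quadratic variation of $\beta$ from \eqref{eq: beta beta t} and the general occupation time formula of Lemma \ref{lem:OccTimeFormula}. The result is essentially a routine combination of these, so no major obstacle is expected.

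First I would handle the first equality. By \eqref{eq: beta beta t} we have $\langle\beta,\beta\rangle_s = s\wedge\tau$, so the Stieltjes measure $d\langle\beta,\beta\rangle_s$ on $\mathbb{R}_+$ coincides pathwise with the Lebesgue measure restricted to $[0,\tau]$; equivalently, $d\langle\beta,\beta\rangle_s = \mathbb{I}_{\{s\leq \tau\}}\,ds$. For any non-negative Borel function $h$ on $\mathbb{R}_+\times\mathbb{R}$ and any $t\geq 0$ this yields
\[
\int_0^t h(s,\beta_s)\,d\langle\beta,\beta\rangle_s = \int_0^{t\wedge\tau} h(s,\beta_s)\,ds,\quad \mathbf{P}\textrm{-a.s.},
\]
which is the first claimed identity.

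Second I would apply Lemma \ref{lem:OccTimeFormula} directly to the continuous semimartingale $X=\beta$, whose semimartingale property was established in Theorem \ref{thm:beta semimartingaale}. That lemma provides a single $\mathbf{P}$-negligible exceptional set outside of which
\[
\int_0^t h(s,\beta_s)\,d\langle\beta,\beta\rangle_s = \int_{-\infty}^{+\infty}\!\left(\int_0^t h(s,x)\,dL^{\beta}(s,x)\right)dx
\]
holds for every $t\geq 0$ and every non-negative Borel $h$, which is the second claimed identity. Chaining the two equalities gives the statement of the corollary; the exceptional set is the one from Lemma \ref{lem:OccTimeFormula}, and no further uniformity argument is needed.
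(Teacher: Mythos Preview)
Your proposal is correct and follows exactly the paper's own proof: the first equality is derived from \eqref{eq: beta beta t} and the second from Lemma \ref{lem:OccTimeFormula} applied to the continuous semimartingale $\beta$.
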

\begin{proof}
The first equality follows from relation (\ref{eq: beta beta t}) and the second
is an application of Lemma \ref{lem:OccTimeFormula}.
\end{proof}
An important property of the local time $L^\bt$ is the existence of a bicontinuous version.
\begin{lem}
\label{lem:local time x continuity} There is a version of $L^\bt$ such that the map 
$(t,x)\in\mb R_+\times \mb R\mapsto L^\bt\left(t,x\right)$ is continuous, $\mathbf{P}$-a.s.
\end{lem}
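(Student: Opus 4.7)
The plan is to take the \cadlag-in-$x$, continuous-in-$t$ modification of $L^\bt$ provided by Lemma \ref{lem:Let--be} and to show that its jumps in the $x$-variable vanish identically. Combined with the \cadlag property this gives continuity in $x$ for every $t$; joint continuity then follows from the uniform-on-compacts regularity built into the modification.

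First, by Theorem \ref{thm:beta semimartingaale} the canonical decomposition of the semimartingale $\bt$ has continuous finite variation part $A_t=-\int_0^{t\wedge\tau}u(s,\bt_s)\,\rmd s$. Substituting this into the jump formula \eqref{eq:ii} yields, outside a single $\mbf P$-null set,
\[
L^\bt(t,x)-L^\bt(t,x-)=-2\int_0^{t\wedge\tau}\mathbb{I}_{\{x\}}(\bt_s)\,u(s,\bt_s)\,\rmd s,\quad t\ge 0,\ x\in\mb R.
\]
I will then apply the occupation time formula of Corollary \ref{lem:occupation-time2}, which is valid outside a single $\mbf P$-null set simultaneously for all $t\geq 0$ and all nonnegative Borel $h$, to the choice $h(s,y):=\mathbb{I}_{\{x\}}(y)|u(s,y)|$:
\[
\int_0^{t\wedge\tau}\mathbb{I}_{\{x\}}(\bt_s)|u(s,\bt_s)|\,\rmd s=\int_{-\infty}^{+\infty}\mathbb{I}_{\{x\}}(y)\Big(\int_0^t|u(s,y)|\,\rmd L^\bt(s,y)\Big)\,\rmd y=0,
\]
the last equality holding because the outer integrand is supported at the single point $y=x$. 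Since the null set produced by Corollary \ref{lem:occupation-time2} does not depend on $x$, this forces the $x$-jumps of $L^\bt(t,\cdot)$ to vanish identically in $(t,x)$ outside a common $\mbf P$-null set.

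Having continuity in $x$ for each $t$ (from zero jumps plus \cadlag) and in $t$ for each $x$ (from Lemma \ref{lem:Let--be}), the remaining step is to upgrade to joint continuity on $\mb R_+\times\mb R$. I expect this to be the main technical obstacle. The cleanest route is to read off joint continuity from the uniform-on-compacts approximation underlying the Revuz--Yor modification in Lemma \ref{lem:Let--be}. A self-contained alternative is to start from Tanaka's formula,
\[
L^\bt(t,x)=|\bt_t-x|-|\bt_0-x|-\int_0^t\sgn(\bt_s-x)\,\rmd b_s+\int_0^{t\wedge\tau}\sgn(\bt_s-x)\,u(s,\bt_s)\,\rmd s,
\]
and verify joint continuity term by term: the deterministic and bounded-variation terms are handled by dominated convergence, using that $\lambda_+\{s\le t\wedge\tau:\bt_s=x_0\}=0$ for every $x_0$ (which follows from the occupation time formula with $h=\mathbb{I}_{\{x_0\}}$), while the It\^o term $x\mapsto\int_0^t\sgn(\bt_s-x)\,\rmd b_s$ admits a jointly H\"older continuous modification via Burkholder--Davis--Gundy bounds and Kolmogorov's continuity criterion, since the quadratic variation of its $x_1$-to-$x_2$ increment is dominated by $\int_{x_1}^{x_2}L^b(t\wedge\tau,y)\,\rmd y$.
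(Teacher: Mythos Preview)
Your core argument---apply the jump formula \eqref{eq:ii} with $A_t=-\int_0^{t\wedge\tau}u(s,\bt_s)\,\rmd s$ and kill the right-hand side via the occupation time formula---is exactly what the paper does. Your use of $|u|$ rather than $u$ is a small improvement, since Corollary~\ref{lem:occupation-time2} is stated only for nonnegative $h$; the paper applies it to $u$ directly, which strictly speaking requires splitting into positive and negative parts.

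Where you diverge is in the last paragraph: the paper simply stops after establishing $L^\bt(t,x)=L^\bt(t,x-)$ for all $(t,x)$ and declares the proof complete, while you correctly flag that separate continuity in $t$ and in $x$ does not in general yield joint continuity. Your instinct to return to the structure of the Revuz--Yor modification is the right one, and it resolves the issue without the heavier Kolmogorov argument you sketch: in the Tanaka decomposition the martingale term $(t,x)\mapsto\int_0^t\sgn(\bt_s-x)\,\rmd b_s$ already has a \emph{bicontinuous} version (this is part of the proof of Theorem~VI.(1.7) in \cite{key-18}), the terms $|\bt_t-x|$ and $|x|$ are jointly continuous, and the bounded-variation term satisfies $\big|\int_0^t-\int_0^{t'}\big|\le \int_{t\wedge t'}^{t\vee t'}|u(s,\bt_s)|\,\rmd s$ uniformly in $x$, so its $t$-continuity is uniform in $x$ on compacts. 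Hence once the $x$-jumps of this last term vanish it is jointly continuous, and so is $L^\bt$. The paper leaves all of this implicit.
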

\begin{proof}
We choose a version of the local time $L^\bt$ according to Lemma \ref{lem:Let--be}. Using (\ref{eq:ii}) we have that
\[
L^\bt\left(t,x\right)-L^\bt\left(t,x-\right)=-2\int_{0}^{t\wedge\tau}\mathbb{I}_{\{x\}}(\bt_s)\,u\left(s,\beta_{s}\right)ds,
\]
for all $t\geq0,\,x\in\mathbb{R}$, $\mathbf{P}$-a.s., where $u$
is the function defined by (\ref{eq:u}). Applying Corollary \ref{lem:occupation-time2} to the right-hand side of the last equality above, we see that
\[
2\intop_{0}^{t\wedge\tau}\mathbb{I}_{\{x\}}(\bt_s)\,u\left(s,\beta_{s}\right)\,\rmd s=2\intop_{-\infty}^{+\infty}\mathbb{I}_{\{x\}}(y)\left(\intop_{0}^{t}u\left(s,y\right)\,\rmd L^\bt\left(s,y\right)\right)\,\rmd y=0,
\]
and hence $L^\bt\left(t,x\right)-L^\bt\left(t,x-\right)=0$,
for all $t\geq0,\,x\in\mathbb{R}$, $\mathbf{P}$-a.s., because $\{x\}$ has Lebesgue measure zero. This completes the proof.
\end{proof}
We also make use of the boundedness of the local time with
respect to the space variable.
\begin{lem}
\label{lem:The-function-l is bounded}  The function $x\mapsto L^\bt\left(t,x\right)$ is bounded for all $t\in\mathbb{R}_{+}$ $\mbf P$-a.s. (the bound may depend on $t$ and $\omega$).
\end{lem}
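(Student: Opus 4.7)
The plan is to show that, $\mathbf{P}$-almost surely, the function $x\mapsto L^\bt(t,x)$ has compact support contained in the range of $\bt$ on $[0,t]$; continuity in $x$ (Lemma \ref{lem:local time x continuity}) then delivers boundedness. Since $\bt$ has continuous paths, the quantities
$$
m_t(\om):=\inf_{0\le s\le t}\bt_s(\om),\qquad M_t(\om):=\sup_{0\le s\le t}\bt_s(\om)
$$
are finite $\mbf P$-a.s., so it suffices to prove $L^\bt(t,x)=0$ whenever $x>M_t$ or $x<m_t$.

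To verify the vanishing, I would apply Tanaka's formula \eqref{eq:tanaka}. Fix $\om$ outside the exceptional set and take $x>M_t(\om)$. Then $\bt_s-x<0$ for every $s\in[0,t]$, so $\mathrm{sign}(\bt_s-x)=-1$ identically, and recalling $\bt_0=0<x$ one gets $|\bt_t-x|=x-\bt_t$ and $|\bt_0-x|=x$. Substituting into \eqref{eq:tanaka},
$$
L^\bt(t,x)=(x-\bt_t)-x-\int_0^t(-1)\,\rmd\bt_s=(x-\bt_t)-x+\bt_t=0.
$$
Analogously, for $x<m_t(\om)$ we have $\bt_s-x>0$ on $[0,t]$, hence $\mathrm{sign}(\bt_s-x)=1$, and Tanaka's formula yields
$$
L^\bt(t,x)=(\bt_t-x)-(-x)-(\bt_t-0)=0.
$$

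Combining these two observations, for $\mbf P$-almost every $\om$ the function $x\mapsto L^\bt(t,x)(\om)$ vanishes outside the bounded interval $[m_t(\om),M_t(\om)]$. By Lemma \ref{lem:local time x continuity}, the map is also continuous in $x$ on the whole real line, so it is bounded on the compact set $[m_t,M_t]$, and therefore on all of $\mb R$, with a bound depending on $t$ and $\om$. There is essentially no obstacle: the only care needed is to respect the convention $\mathrm{sign}(0)=-1$ underlying the right local time used in \eqref{eq:tanaka}, which is why the case $x<m_t$ is handled with strict inequality, the point $x=m_t$ being absorbed into the bounded interval.
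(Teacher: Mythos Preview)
Your proof is correct and follows the same two-step strategy as the paper: first show $L^\bt(t,\cdot)$ vanishes outside the (compact) range of $\bt$ on $[0,t]$, then invoke the bicontinuity of Lemma~\ref{lem:local time x continuity} to conclude boundedness. The only difference is that the paper establishes the vanishing by appealing to the occupation time formula (equivalently, Revuz--Yor, Corollary~VI.(1.9)), whereas you compute it directly from Tanaka's formula with a constant integrand; both justifications are valid and equally short.
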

\begin{proof}
It follows from the occupation time formula (or from Revuz and Yor \cite{key-18}, Corollary VI.(1.9)) that the local time $L^\bt(t,\cdot)$ vanishes outside of the compact interval $[-M_t(\om),M_t(\om)]$ where 
\beq
M_{t}\left(\omega\right) :=\sup_{s\in\left[0,t\right]}\left|\beta_{s}\left(\omega\right)\right|,\quad t\geq0,\;\omega\in\Omega\,,\label{eq:runningMAX}
\eeq
which together with the continuity of $L^\bt\left(t,\cdot\right)$ (see Lemma \ref{lem:local time x continuity}) yields the boundedness of this function, $\mbf P$-a.s.
\end{proof}
Outside a negligible set, for fixed $x\in\mb R$, the local time $L^\bt\left(\cdot,x\right)$
is a positive continuous increasing function, and we can associate
with it a random measure on $\mathbb{R}_{+}$:
\[
L^\bt\left(B,x\right):=\int_{B}\rmd L^\bt\left(s,x\right),\quad B\in\mathcal{B}\left(\mathbb{R}_{+}\right).
\]

\begin{lem}
\label{lem:weak convergence of loc time meas} Outside a negligible set, for any sequence $\left(x_{n}\right)_{n\in\mathbb{N}}$
in $\mathbb{R}$ converging to $x\in\mathbb{R}$, the sequence $\left(L^\bt\left(\cdot,x_{n}\right)\right)_{n\in\mathbb{N}}$
converges weakly to $L^\bt\left(\cdot,x\right)$, i.e.,
\[
\int_{\mb R_+} g\left(s\right)L^\bt\left(ds,x_{n}\right)\xrightarrow[n\rightarrow \infty]{}\int_{\mb R_+} g\left(s\right)L^\bt\left(ds,x\right),
\]
for all bounded and continuous functions $g:\mathbb{R}_{+}\mapsto\mathbb{R}$.
\end{lem}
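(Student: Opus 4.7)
The plan is to reduce the weak convergence of the random measures $L^\beta(\cdot, x_n)$ to $L^\beta(\cdot, x)$ to the pointwise convergence of their (sub-)distribution functions, which is already provided by the joint continuity of the local time in Lemma~\ref{lem:local time x continuity}. The decisive extra input is that the measures are all finite and share a common compact support.

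First I would observe that each random measure $L^\beta(\cdot, x)$ is finite and supported in $[0, \tau]$. Pathwise, the defining formula \eqref{eq:info-proc} gives $\beta_s = 0$ for all $s \geq \tau$, so $\beta$ is constant on $[\tau, \infty)$. Tanaka's formula \eqref{eq:tanaka} then yields $L^\beta(t, x) - L^\beta(\tau, x) = |\bt_t - x| - |\bt_\tau - x| - \int_\tau^t \sgn(\beta_s - x)\, \rmd \beta_s = 0$ for every $t \geq \tau$ and every $x \in \mathbb{R}$. Equivalently, one may appeal to the occupation time formula (Corollary~\ref{lem:occupation-time2}) together with $\langle\beta,\beta\rangle_s = s \wedge \tau$ to obtain the same identity for almost every $x$, and then extend it to all $x$ by continuity in $x$.

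Next, I fix $\omega$ outside the exceptional $\mathbf{P}$-null set of Lemma~\ref{lem:local time x continuity} (united with the negligible set on which $\beta$ fails to vanish after $\tau$). Joint continuity gives $L^\beta(t, x_n)(\omega) \to L^\beta(t, x)(\omega)$ for every $t \geq 0$, and the limit $t \mapsto L^\beta(t, x)(\omega)$ is continuous in $t$. Since the measures are supported in $[0, \tau(\omega)]$, the total masses also converge: for any fixed $T > \tau(\omega)$,
\[
L^\beta(\infty, x_n)(\omega) = L^\beta(T, x_n)(\omega) \longrightarrow L^\beta(T, x)(\omega) = L^\beta(\infty, x)(\omega).
\]
The classical Helly--Bray theorem (or Portmanteau) then yields weak convergence of the finite measures $L^\beta(\cdot, x_n)(\omega)$ to $L^\beta(\cdot, x)(\omega)$, i.e.\ $\int g(s)\, L^\beta(\rmd s, x_n)(\omega) \to \int g(s)\, L^\beta(\rmd s, x)(\omega)$ for every bounded continuous $g$.

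There is no real obstacle here; the only point requiring care is bookkeeping, namely ensuring that a \emph{single} negligible exceptional set serves all sequences $x_n \to x$ and all test functions $g$ simultaneously. This is immediate because the only random input is the bicontinuous version of $L^\beta$ from Lemma~\ref{lem:local time x continuity} together with the pathwise identity $\beta_s \equiv 0$ on $[\tau, \infty)$; outside their union the argument is entirely deterministic.
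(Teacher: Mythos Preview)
Your proof is correct and follows essentially the same route as the paper: fix the bicontinuous version of $L^\beta$ from Lemma~\ref{lem:local time x continuity}, note that all the measures are finite and supported in $[0,\tau]$, use continuity in $x$ to get convergence of the distribution functions and of the total mass, and conclude weak convergence. The only difference is cosmetic---you spell out the Tanaka-formula justification for the common support $[0,\tau]$ and name Helly--Bray explicitly, while the paper states both facts without elaboration.
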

\begin{proof}
We fix a negligible set outside of which $L^\bt$ is bicontinuous (cf. Lemma \ref{lem:local time x continuity}) and outside of which we will be working now. The measures $\left(L^\bt\left(\cdot,x_{n}\right)\right)_{n\in\mathbb{N}}$
are finite on $\mathbb{R}$ and they are supported by $\left[0,\tau\right]$.
By continuity of $L^\bt\left(t,\cdot\right)$ we have that
$L^\bt\left(s,x_{n}\right)\xrightarrow[n\rightarrow \infty]{}L^\bt\left(s,x\right),\,s\geq0$,
from which it follows that
\begin{equation}
L^\bt\left(\left[0,s\right],x_{n}\right)\xrightarrow[n\rightarrow \infty]{}L^\bt\left(\left[0,s\right],x\right),\quad s\geq0\,.\label{eq:star-1-1}
\end{equation}
We also have this convergence for the whole space $\mb R_+$:
\[
L^\bt\left(\mb R_+,x_{n}\right)=L^\bt\left(\left[0,\tau\right],x_{n}\right)\xrightarrow[n\rightarrow\infty]{}L^\bt\left(\left[0,\tau\right],x\right)=L^\bt\left(\mb R_+,x\right)\,.
\]
From this we can conclude that the measures $L^\bt\left(\cdot,x_{n}\right)$ converge weakly to $L^\bt\left(\cdot,x\right)$.
\end{proof}

\section{\label{sec:Auxiliary-Results}Auxiliary Results}
\noindent In \eqref{density-q} we had introduced the function $q$ by
$$
q(h,x):=\frac{1}{h}\int_0^h p(u,x,0)\,\rmd u,\quad 0<h\le 1, \ x\in\mb R\,,
$$
where $p(t,\cdot,y)$ is the density of the normal distribution with variance $t$ and expectation $y$ (see \eqref{eq:GaussDens}). 
\bg{lem}\label{q-weak convergence} The functions $q(h,\cdot)$ are probability density functions with respect to the Lebesgue measure on $\mb R$. The probability measures $\mb Q_h$ on $\mb R$ associated with the density $q_h$ converge weakly as $h\downarrow0$ to the Dirac measure $\delta_0$ at $0$.
\e{lem}
\bg{proof} The first statement of the lemma is obvious. For verifying the second statement, let $f$ be a bounded continuous function on $\mb R$. Using Fubini's theorem, we obtain
\beqas
\int_\mb R f(x)\,\mb Q_h(\rmd x)&=&\int_\mb R f(x)\,q_h(x)\,\rmd x\\
&=&\int_\mb R f(x)\,\Big(\frac{1}{h}\int_0^hp(u,x,0)\,\rmd u\Big)\,\rmd x\\
&=&\frac{1}{h}\int_0^h\Big(\int_\mb R f(x)\,p(u,x,0)\,\rmd x\Big)\,\rmd u\\
&=&\frac{1}{h}\int_0^h\Big(\int_\mb R f(x)\,\mc N(0,u)(\rmd x)\Big)\,\rmd u\,.
\eeqas
Since the function $u\in[0,1]\mapsto\mc N(0,u)$ which associates to every $u\in[0,1]$ the centered Gaussian law $\mc N(0,u)$ is continuous with respect to weak convergence of probability measures (note that $\mc N(0,0)=\delta_0$), we observe that the function $u\in[0,1]\mapsto\int_\mb R f(x)\,\mc N(0,u)(\rmd x)$ is continuous. An application of the fundamental theorem of calculus yields that the right-hand side converges to $\int_\mb R f(x)\,\delta_0(\rmd x)$ as $h\downarrow0$ and hence
$$
\lim_{h\downarrow0}\int_\mb R f(x)\,\mb Q_h(\rmd x)=f(0)\,,
$$
proving the second statement of the lemma.
\end{proof}
Now we consider the function $g$ introduced in \eqref{eq:g}:
$$
g\left(s,x\right):=\Big(\int_s^\infty\varphi_{s}\left(v,x\right)f\left(v\right)\,\rmd v\Big)^{-1},\quad s>0,\,x\in\mathbb{R}\,.
$$
\begin{lem}\label{lem:Lemma3} {\rm (1)} For all $x\in\mathbb{R}$ and $0<t_0<t$, the function $g\left(\cdot,x\right): [t_0,t]\mapsto\mathbb{R}$
is bounded, i.e., there exists a real constant $C\left(t_0,t,x\right)$ such that
\[
\sup_{s\in[t_0,t]}g\left(s,x\right)\leq C(t_0,t,x)\,.
\]

{\rm (2)} For all $x\in\mathbb{R}$ and $0<t_0<t$, the function $g(\cdot,x): [t_0,t]\mapsto\mathbb{R}$
is continuous, i.e., for all $s_n,s\in[t_0,t]$ such that $s_n\rightarrow s$,
\[
\lim_{s_{n}\rightarrow s}g(s_{n},x)=g(s,x)\,.
\]

{\rm (3)} Let $\left(x_{n}\right)_{n\in\mathbb{N}}$ be a sequence converging monotonically to $x\in\mathbb{R}$. Then, for all $0<t_0<t$,
\[
\sup_{s\in\left[t_0,t\right]}\left|g(s,x_{n})-g(s,x)\right| \xrightarrow[n\rightarrow \infty]{}0\,.
\]
\end{lem}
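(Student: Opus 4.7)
The plan is to reduce all three statements to properties of the quantity
$$I(s,x) := \int_s^\infty \varphi_s(v,x)\,f(v)\,\rmd v,$$
since $g(s,x) = 1/I(s,x)$. Accordingly I would first establish a strictly positive lower bound of $I$ on $[t_0,t]$, then its continuity in $s$, and finally a Dini-type uniform convergence as $x_n \to x$ monotonically.

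For part (1), the key estimate is that for $v \geq 2t$ and $s \in [t_0,t]$ one has $v - s \geq v/2$ and $s(v-s)/v \geq s/2 \geq t_0/2$, which yields
$$\varphi_s(v,x) = \sqrt{\frac{v}{2\pi s(v-s)}}\,\exp\Big(-\frac{x^2 v}{2s(v-s)}\Big) \geq \frac{1}{\sqrt{2\pi t}}\exp\Big(-\frac{x^2}{t_0}\Big).$$
Combined with Assumption \ref{assu:The-distribution-function}(ii), which guarantees $1 - F(2t) > 0$, this gives
$$I(s,x) \geq \frac{1-F(2t)}{\sqrt{2\pi t}}\exp\Big(-\frac{x^2}{t_0}\Big) > 0$$
uniformly in $s \in [t_0, t]$, so $g(s,x) \leq C(t_0,t,x)$ with $C$ continuous in $x$.

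For part (2), I would substitute $v = s+w$ to rewrite $I(s,x) = \int_0^\infty \varphi_s(s+w,x)f(s+w)\,\rmd w$. The main obstacle is the integrable singularity of $\varphi_s(s+w,x)$ at $w = 0$; for $s \in [t_0,t]$ it is uniformly dominated by $\sqrt{(t_0+w)/(2\pi t_0 w)}$, as $(s+w)/(sw) = 1/w + 1/s \leq 1/w + 1/t_0$. Splitting the integral at some $R \geq 1$, the tail is uniformly small,
$$\int_R^\infty \varphi_s(s+w,x)f(s+w)\,\rmd w \leq C'\,(1 - F(t_0+R)) \xrightarrow[R\to\infty]{} 0,$$
while on the compact piece $[0,R]$ the uniform majorant $\sqrt{(t_0+R)/(2\pi t_0 w)} \cdot \sup_{[t_0, t+R]} f$ is integrable, so dominated convergence gives continuity of $I(\cdot,x)$. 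The lower bound from part (1) transfers this continuity to $g(\cdot,x) = 1/I(\cdot,x)$.

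For part (3), which I expect to be the main technical point, the approach is Dini's theorem. The crucial observation is that $x \mapsto \varphi_s(v,x)$ is even and strictly monotone on each of $(-\infty,0]$ and $[0,\infty)$. Hence for any monotone sequence $x_n \to x$, for all $n$ large the $x_n$ lie on the same side of $0$ as $x$ (or at $0$), and $\varphi_s(v,x_n)$ converges to $\varphi_s(v,x)$ monotonically in $n$, for every $v > s$. Monotone convergence then gives $I(s,x_n) \to I(s,x)$ monotonically in $n$, pointwise in $s$. Since each $I(\cdot,x_n)$ and $I(\cdot,x)$ is continuous on the compact $[t_0,t]$ by part (2), Dini's theorem upgrades this to uniform convergence on $[t_0,t]$. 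Finally, applying part (1) together with the continuity of $x \mapsto C(t_0,t,x)$ on the compact set $\{x_n\}_n \cup \{x\}$ yields a uniform lower bound $I(s,x_n),\,I(s,x) \geq c > 0$, so the identity
$$g(s,x_n) - g(s,x) = \frac{I(s,x) - I(s,x_n)}{I(s,x_n)\,I(s,x)}$$
transfers the uniform convergence from $I$ to $g$, completing the proof.
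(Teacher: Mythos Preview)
Your proof is correct and follows the same overall strategy as the paper: rewrite $g=1/D$ (your $I$), establish a strictly positive lower bound for $D$ on $[t_0,t]$ for part (1), prove continuity of $D(\cdot,x)$ via dominated convergence for part (2), and invoke Dini's theorem for part (3).

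The execution differs in two minor but pleasant ways. For (1), the paper restricts to $v\ge t$ and obtains the lower bound $\tilde C(t_0,t,x)=\int_t^\infty (2\pi t)^{-1/2}\exp\!\big(-\tfrac{v x^2}{2t_0(v-t)}\big)f(v)\,\rmd v$, whose continuity in $x$ must itself be argued; your cutoff at $v\ge 2t$ gives the explicit constant $(1-F(2t))(2\pi t)^{-1/2}\exp(-x^2/t_0)$, which is visibly continuous in $x$ and feeds directly into (3). For (2), the paper keeps the variable $v$, splits the integral at $t$, and handles the near-singular piece $[s_n,t]$ via a uniform-integrability argument (Theorem~\ref{thm:Lebesgue2}); your substitution $v=s+w$ puts the singularity at the fixed endpoint $w=0$, where the uniform majorant $\sqrt{(t_0+w)/(2\pi t_0 w)}$ is integrable, so ordinary dominated convergence suffices. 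Part (3) is identical in both: monotonicity in $|x|$ of $\varphi_s(v,x)$ yields monotone pointwise convergence of $D(\cdot,x_n)$, and Dini upgrades this to uniform convergence, which passes to $g$ via the uniform lower bound on $D$.
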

\begin{proof}
Let us define, for every $s\in\left[t_0,t\right]$ and $x\in\mathbb{R}$,
\beqa
\nonumber D\left(s,x\right)&&\\
&:=&\int_s^\infty \sqrt{\frac{v}{2\pi s\,(v-s)}}\exp\left(-\frac{v\,x^{2}}{2s\,(v-s)}\right)f\left(v\right)\,\rmd v\,,\label{eq:capital D}
\eeqa
and rewrite $g$ as 
\begin{equation}
g(s,x)=\frac{1}{D\left(s,x\right)},\quad s\in\left[t_0,t\right],\;x\in\mathbb{R}\,.\label{eq:implicit D}
\end{equation}
In order to prove statement (1), it suffices to verify that there
exists a constant $\tilde{C}\left(t_0,t,x\right)$ such that
\begin{equation}
0<\tilde{C}\left(t_0,t,x\right)\leq D\left(s,x\right),\quad s\in[t_0,t],\; x\in\mb R\,.\label{eq:minorizing}
\end{equation}
Such a constant can be found by setting
\begin{equation}
\tilde{C}\left(t_0,t,x\right):=\int_t^\infty\sqrt{\frac{1}{2\pi t}}\exp\left(-\frac{v\,x^{2}}{2t_0(v-t)}\right)f(v)\,\rmd v\,,\label{eq: C tilde}
\end{equation}
proving the first statement of the lemma. 

In order to prove statement (2) of the lemma, it suffices to
verify that the function $s\mapsto D\left(s,x\right),\,s\in[t_0,t]$,
is continuous, a fact that can be proved using Lebesgue's dominated convergence theorem. Indeed, let $s_n,s\in[t_0,t]$ such that $s_n\rightarrow s$ as $n\rightarrow\infty$. Rewriting \eqref{eq:capital D} we get
\beqas
\lefteqn{D\left(s_n,x\right)}\\
&=&\int_{t_0}^\infty \mb I_{(s_n,+\infty)}(v)\sqrt{\frac{v}{2\pi s_n\,(v-s_n)}}\exp\left(-\frac{v\,x^{2}}{2s_n\,(v-s_n)}\right)f\left(v\right)\,\rmd v\,.
\eeqas
First we consider the integral from $t$ to $\infty$: For $v\geq t$, we can make an upper estimate of the integrand by $\sqrt{\frac{v}{2\pi t_0\,(v-t)}}\, f(v)$ which is integrable over $[t,+\infty)$. For the second part of the integral from $t_0$ to $t$ we estimate the integrand by $\mb I_{(s_n,+\infty)}(v)\sqrt{\frac{t}{2\pi t_0\,(v-s_n)}}\,c$, where $c$ is an upper bound of $f$ on $[t_0,t]$, and by integrating we observe that  
$$
\lim_{n\rightarrow\infty}\int_{t_0}^t\!\mb I_{(s_n,+\infty)}(v) \sqrt{\frac{t}{2\pi t_0\,(v-s_n)}}\,\rmd v
\!=\!\int_{t_0}^t\!\mb I_{(s,+\infty)}(v)\sqrt{\frac{t}{2\pi t_0\,(v-s)}}\,\rmd v\,.
$$
As the integrands are nonnegative, we get convergence in $L^1([t_0,t])$ and hence uniform integrability (cf. Theorem \ref{thm:Lebesgue2}). This means that the sequence 
$$
I_{(s_n,+\infty)}(v)\sqrt{\frac{v}{2\pi s_n\,(v-s_n)}}\exp\left(-\frac{v\,x^{2}}{2s_n\,(v-s_n)}\right)f\left(v\right)
$$
is uniformly integrable on $[t_0,t]$ and we can apply Lebesgue's theorem (cf. Theorem \ref{thm:Lebesgue2}) to conclude 
\beqas
\lefteqn{\lim_{n\rightarrow\infty}\int_{t_0}^t \mb I_{(s_n,+\infty)}(v)\sqrt{\frac{v}{2\pi s_n\,(v-s_n)}}\exp\left(-\frac{v\,x^{2}}{2s_n\,(v-s_n)}\right)f\left(v\right)\,\rmd v}\\
&=&\int_{t_0}^t \mb I_{(s,+\infty)}(v)\sqrt{\frac{v}{2\pi s\,(v-s)}}\exp\left(-\frac{v\,x^{2}}{2s\,(v-s)}\right)f\left(v\right)\,\rmd v\,.
\eeqas
Summarizing we get 
$$
\lim_{n\rightarrow\infty}D\left(s_n,x\right)=D\left(s,x\right)
$$
and the proof of statement (2) of the lemma finished.

We turn to the proof of statement (3) of the lemma. Using relation (\ref{eq:implicit D}) we see that
\[
\left|g\left(s,x_{n}\right)-g\left(s,x\right)\right| =\frac{\left|D\left(s,x_{n}\right)-D\left(s,x\right)\right|}
{D\left(s,x_{n}\right)D\left(s,x\right)}
\]
and from inequality (\ref{eq:minorizing}) we get that 
\[
\sup_{s\in\left[t_0,t\right]}\left|g\left(s,x_{n}\right)-g\left(s,x\right)\right|\leq\frac{\sup_{s\in\left[t_0,t\right]}\left|D\left(s,x_{n}\right)-D\left(s,x\right)\right|}{\tilde{C}\left(t_0,t,x_{n}\right)\,\tilde{C}\left(t_0,t,x\right)},
\]
where $\tilde{C}\left(t_0,t,x\right)$ is defined by (\ref{eq: C tilde}). It is easy to see that 
\[
\lim_{n\rightarrow \infty}
\frac{1} {\tilde{C}\left(t_0,t,x_{n}\right)\tilde{C}\left(t_0,t,x\right)}=\frac{1}{\tilde{C}\left(t_0,t,x\right)^{2}}<+\infty\,.
\]
Hence it remains to prove that
\[
\sup_{s\in\left[0,t\right]}\left|D\left(s,x_{n}\right)-D\left(s,x\right)\right|\xrightarrow[n\rightarrow \infty]{}0.
\]
By assumption, the sequence $x_{n}$ converges monotonically to $x$. In such a case it is easy to see that the sequence of functions $D\left(\cdot,x_{n}\right)$ is monotone. Furthermore, using Lebesgue's dominated convergence theorem, we verify that $D\left(s,x_{n}\right)$ converges to $D\left(s,x\right)$, for all $s\in\left[t_0,t\right]$. Since the function $s\mapsto D\left(s,x\right)$ is
also continuous on $[t_0,t]$, according to Dini's theorem, $D\left(\cdot,x_{n}\right)$ converges uniformly to $D\left(\cdot,x\right)$ on $\left[t_0,t\right]$. This
implies the third statement of the lemma and the proof is finished.
\end{proof}
\begin{lem}
\label{lem:Let--be-AUX}Let $h,h_{n}$
be bounded and continuous functions on a metric space $E$ and $\mu,\mu_{n}$ be finite measures on $\left(E,\mathcal{B}\left(E\right)\right)$.
Suppose that the following two conditions are satisfied:
\begin{enumerate}
\item The sequence of functions $h_{n}$ converges uniformly to $h$.
\item The sequence of measures $\mu_{n}$ converges weakly to $\mu$.
\end{enumerate}
Then $\lim_{n\uparrow+\infty}\int_{E}h_{n}\,\rmd\mu_{n}=\int_{E}h \,\rmd\mu$.
\end{lem}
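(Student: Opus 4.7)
The plan is to bound the quantity $\int_E h_n\,\rmd\mu_n - \int_E h\,\rmd\mu$ by inserting the intermediate expression $\int_E h\,\rmd\mu_n$ and applying the triangle inequality:
\[
\Big|\int_E h_n\,\rmd\mu_n - \int_E h\,\rmd\mu\Big| \leq \Big|\int_E(h_n-h)\,\rmd\mu_n\Big| + \Big|\int_E h\,\rmd\mu_n - \int_E h\,\rmd\mu\Big|.
\]
The second summand is handled directly by hypothesis (2): since $h$ is bounded and continuous on $E$, the very definition of weak convergence of $\mu_n$ to $\mu$ yields $\int_E h\,\rmd\mu_n\to\int_E h\,\rmd\mu$, so this summand tends to $0$.

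For the first summand I would use the crude bound
\[
\Big|\int_E(h_n-h)\,\rmd\mu_n\Big| \leq \sup_{x\in E}|h_n(x)-h(x)|\cdot\mu_n(E)
\]
and conclude via the two hypotheses combined. By (1) the supremum tends to $0$. To ensure the product still vanishes, I need the sequence $\mu_n(E)$ to be bounded in $n$. This follows from (2) applied to the constant function $\mathbf{1}_E$, which is bounded and continuous on $E$: $\mu_n(E)\to\mu(E)<\infty$, so in particular $\sup_n\mu_n(E)<\infty$. Multiplying a null sequence by a bounded one gives $0$, and the first summand also tends to zero.

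There is essentially no obstacle: the lemma is a routine triangle-inequality argument, and the only point worth emphasizing is that neither hypothesis alone suffices. Uniform convergence of $h_n$ is needed to dominate the oscillations of the integrand uniformly in the moving measure $\mu_n$, while weak convergence of $\mu_n$ is needed both for the direct part and implicitly to guarantee the boundedness of the total masses $\mu_n(E)$.
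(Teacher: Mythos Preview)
Your proof is correct and follows essentially the same route as the paper: the same triangle inequality with intermediate term $\int_E h\,\rmd\mu_n$, the same bound $\sup_{x\in E}|h_n(x)-h(x)|\cdot\mu_n(E)$ on the first summand, and weak convergence for the second. You even make explicit the boundedness of $\mu_n(E)$ (via weak convergence applied to $\mathbf{1}_E$), which the paper uses implicitly.
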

\begin{proof}
It can immediately be verified that 
\begin{multline*}
\Big|\int_{E}h_{n}\,\rmd\mu_{n}-\int_{E}h \,\rmd\mu\Big|\\
\leq\sup_{x\in E}\Big|h\left(x\right)-h_{n}\left(x\right)\Big|\int_{E}\,\rmd \mu_{n}+\Big|\int_{E}h \,\rmd \mu_{n}-\int_{E}h\, \rmd \mu\Big|,
\end{multline*}
which converges to 0 as $n\uparrow+\infty$.
\end{proof}
\begin{lem}
\label{lem:Lemma4} Let $0<t_0<t$. The function $k:\,\mb R\rightarrow\mathbb{R}_{+}$ given by
\[
k\left(x\right):=\int_{t_0}^{t}g\left(s,x\right)\,f\left(s\right)\,\rmd L^\bt(s,x),\quad x\in \mb R\,,
\]
is bounded and continuous, where the function $g$ is given by (\ref{eq:g}). 
\end{lem}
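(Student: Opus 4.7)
The plan is to establish boundedness and continuity of $k$ separately, leaning on the structural lemmas already at our disposal.

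For \textbf{boundedness}, by Lemma \ref{lem:The-function-l is bounded}, $L^\bt(t,\cdot)$ vanishes outside the compact interval $[-M_t(\om),M_t(\om)]$, so $k(x)=0$ for $|x|>M_t(\om)$. On the compact range $|x|\le M_t(\om)$, the three factors entering the integral admit uniform bounds: $f$ is bounded on $[t_0,t]$ by continuity; $L^\bt(t,\cdot)$ is bounded on $[-M_t,M_t]$ by Lemma \ref{lem:local time x continuity}; and $g(s,x)=1/D(s,x)$ admits a uniform upper bound on $[t_0,t]\times[-M_t,M_t]$ via the explicit lower bound $\tilde C(t_0,t,x)$ appearing in the proof of Lemma \ref{lem:Lemma3}(1): since $\tilde C(t_0,t,x)$ is strictly positive and monotone decreasing in $|x|$, $\inf_{|x|\le M_t}\tilde C(t_0,t,x)=\tilde C(t_0,t,M_t)>0$ gives the required uniform lower bound on $D$. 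Multiplying the three bounds yields an $\om$-dependent but $x$-free majorant of $k(x)$.

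For \textbf{continuity}, let $x_n\to x$ in $\mb R$. I invoke the subsequence principle: to show $k(x_n)\to k(x)$ it suffices that every subsequence of $(x_n)$ contains a further subsequence along which $k$ converges to $k(x)$. From an arbitrary subsequence I extract a \emph{monotone} sub-subsequence (every convergent real sequence admits one), still denoted $(x_n)$. By Lemma \ref{lem:Lemma3}(3), $g(\cdot,x_n)\to g(\cdot,x)$ uniformly on $[t_0,t]$, and hence $g(\cdot,x_n)f(\cdot)\to g(\cdot,x)f(\cdot)$ uniformly on $[t_0,t]$ by boundedness of $f$. By Lemma \ref{lem:weak convergence of loc time meas}, the finite measures $L^\bt(\cdot,x_n)$ converge weakly to $L^\bt(\cdot,x)$ on $\mb R_+$; since $s\mapsto L^\bt(s,x)$ is continuous (Lemma \ref{lem:local time x continuity}), the limit measure has no atoms, so by the Portmanteau theorem the restrictions to $[t_0,t]$ converge weakly as finite measures on $[t_0,t]$. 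Applying Lemma \ref{lem:Let--be-AUX} on $E=[t_0,t]$ with $h_n(s)=g(s,x_n)f(s)$ and $\mu_n=L^\bt(\cdot,x_n)|_{[t_0,t]}$ then yields $k(x_n)\to k(x)$, completing the argument.

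The \textbf{main obstacle} is bridging the gap between the monotone-sequence hypothesis of Lemma \ref{lem:Lemma3}(3) and the general convergence $x_n\to x$; this is overcome by the subsequence extraction above. A secondary point is that one must check that weak convergence of the local time measures on $\mb R_+$ descends to weak convergence on the compact interval $[t_0,t]$, which rests on the no-atom property of the limit measure at the endpoints $t_0,t$, itself a consequence of the continuity of $s\mapsto L^\bt(s,x)$.
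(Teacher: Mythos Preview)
Your proof is correct and follows essentially the same route as the paper: reduce to monotone sequences (the paper phrases this as proving left and right continuity separately), invoke Lemma~\ref{lem:Lemma3}(3) for uniform convergence of $g(\cdot,x_n)f(\cdot)$, Lemma~\ref{lem:weak convergence of loc time meas} for weak convergence of the local-time measures, and Lemma~\ref{lem:Let--be-AUX} to pass to the limit, with boundedness handled via the compact support $[-M_t,M_t]$ of $L^\bt(t,\cdot)$. Your treatment is in fact slightly more careful than the paper's in two places: you make the subsequence extraction explicit, and you justify via Portmanteau why weak convergence on $\mathbb{R}_+$ restricts to $[t_0,t]$ (the paper silently applies Lemma~\ref{lem:Let--be-AUX} without addressing this restriction).
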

\begin{proof} Let us first restrict to a compact subset $E$ of $\mb R$. First we prove the right and left continuity, hence the continuity, of the function $k$. Let $x_{n}$ be a sequence from $E$ converging monotonically to $x\in E$. From Lemma \ref{lem:Lemma3}
we know that the bounded and continuous functions $g\left(\cdot,x_{n}\right):\,[t_0,t]\rightarrow\mb R$
converge uniformly to the bounded and continuous function $g\left(\cdot,x\right):\,[t_0,t]\rightarrow\mb R$ as $n\rightarrow\infty$. From Lemma \ref{lem:weak convergence of loc time meas},
we obtain that the sequence of measures $L^\bt\left(\cdot,x_{n}\right)$
converges weakly to $L^\bt\left(\cdot,x\right)$ as $n\rightarrow\infty$. Applying Lemma \ref{lem:Let--be-AUX}, we have that
\beqas
\lim_{n\rightarrow\infty}k\left(x_{n}\right)&=&\lim_{n\rightarrow\infty}\int_{t_0}^{t}g\left(s,x_{n}\right)f(s)\,\rmd L^\bt\left(s,x_{n}\right)\\
&=&\int_{t_0}^{t}g\left(s,x\right)f(s)\,\rmd L^\bt\left(s,x\right)=k\left(x\right)\,.
\eeqas
Consequently, the function $k$ is continuous on $E$. The boundedness of $k$ now follows from the compactness of $E$. In order to show that the statement also holds for $\mb R$, let us choose $E=[-M_t-1,M_t+1]$ (see \eqref{eq:runningMAX} for notation). As $L^\bt\left(s,x\right)=0$, $s\in[0,t],\,x\notin[-M_t,M_t]$ (see the proof of Lemma \ref{lem:The-function-l is bounded}), the statement follows. 
\end{proof}

\section{\label{sec:The-Meyer-Approach}The Meyer Approach to the Compensator}

\noindent Below we briefly recall the approach developed by P.-A. Meyer
\cite{key-17} for computing the compensator of a right-continuous potential of class (D).
In this section $\mathbb{F}=\left(\mathcal{F}_{t}\right)_{t\geq0}$
denotes a filtration satisfying the usual hypothesis of right-continuity and completeness.

We begin with the definition of a right-continuous potential of class (D). Let $X=\left(X_{t},\,t\geq0\right)$ be a right-continuous $\mathbb{F}$-super\-mar\-tin\-gale and let $\mathcal{T}$ be the collection of all finite $\mathbb{F}$-stopping times relative to this family. The process $X$ is said to \textit{belong to the class }(D) if the collection of random variables
$X_{T},\,T\in\mathcal{T}$, is uniformly integrable. We say that the right-continuous supermartingale $X$ is a \textit{potential} if the random variables $X_{t}$ are non-negative and if 
\[
\lim_{t\rightarrow+\infty}\mathbf{E}\left[X_{t}\right]=0.
\]
\begin{defn}
\label{def:potential generated by A}Let $C=\left(C_{t},\,t\geq0\right)$
be an integrable $\mathbb{F}$-adapted right-continuous increasing
process, and let $L=\left(L_{t},\,t\geq0\right)$ be a right-continuous modification of the martingale $\left(\mathbf{E}\left[C_{\infty}|\mathcal{F}_{t}\right],\,t\geq0\right)$; the process $Y=\left(Y_{t},\,t\geq0\right)$ given by 
\[
Y_{t}:=L_{t}-C_{t}
\]
is called the \textit{potential generated by }$C$.
\end{defn}
The following result establishes a connection between potentials generated by an increasing process and potentials of class (D). Let $h$ be a strictly positive real number and $X=\left(X_{t},\,t\geq0\right)$
be a potential of class (D), and denote by $\left(p_{h}X_{t},\,t\geq0\right)$
the right-continuous modification of the supermartingale $\left(\mathbf{E}\left[X_{t+h}|\mathcal{F}_{t}\right],\,t\geq0\right)$.
\begin{thm}
\label{thm:prop of Xh}Let $X=\left(X_{t},\,t\geq0\right)$ be a potential of class (D), let $h>0$ and $A^{h}=\left(A_{t}^{h},\,t\geq0\right)$
be the process defined by
\begin{equation}
A_{t}^{h}:=\frac{1}{h}\intop_{0}^{t}\left(X_{s}-p_{h}X_{s}\right)ds.\label{eq:A acca}
\end{equation}
Then $A^{h}$ is an integrable increasing process which generates a potential of class (D) $X^{h}=\left(X_{t}^{h},\,t\geq0\right)$
dominated by $X$, i.e., the process $X-X^{h}$ is a potential. It holds
$$
X_t^h=\frac{1}{h}\mbf E\Big[\int_0^hX_{t+s}\,\rmd s|\mc F_t\Big], \quad \mbf P\mbox{-a.s.},\; t\geq 0\,.
$$
\end{thm}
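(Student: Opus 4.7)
My plan is to proceed in the order dictated by the statement: first establish the properties of $A^h$, then derive the explicit formula for $X^h$, and finally verify that $X^h$ is a potential of class~(D) dominated by $X$.

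First I would show that $A^h$ is an adapted, continuous, non-negative, non-decreasing, integrable process. The supermartingale property of $X$ gives $p_h X_s \le X_s$ $\mbf P$-a.s.\ (after fixing a right-continuous modification of the conditional expectation), so the integrand defining $A^h$ is non-negative; monotonicity and continuity of $A^h$ follow immediately. For integrability, Fubini combined with the substitution $s\mapsto s+h$ in one of the two resulting integrals yields the telescoping
\[
\mbf E[A^h_\infty]=\tfrac{1}{h}\int_0^\infty\!\big(\mbf E[X_s]-\mbf E[X_{s+h}]\big)\,\rmd s=\tfrac{1}{h}\int_0^h\!\mbf E[X_s]\,\rmd s<+\infty,
\]
which is finite since $X$ is a non-negative integrable potential.

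Next I would derive the explicit formula for $X^h_t=\mbf E[A^h_\infty|\mc F_t]-A^h_t$. Splitting $\int_0^\infty$ at $t$, using the $\mc F_t$-measurability of $A^h_t$, and applying the tower property $\mbf E[p_h X_s|\mc F_t]=\mbf E[X_{s+h}|\mc F_t]$ for $s\geq t$, a second telescoping (again via $s\mapsto s+h$) in the tail integral gives
\[
X^h_t=\tfrac{1}{h}\int_t^{t+h}\!\mbf E[X_s|\mc F_t]\,\rmd s=\tfrac{1}{h}\,\mbf E\Big[\int_0^h\! X_{t+s}\,\rmd s\,\Big|\,\mc F_t\Big],
\]
the last equality being Fubini--Tonelli. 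From this representation I would read off that $X^h$ is non-negative, right-continuous, a supermartingale as the difference of the right-continuous martingale $\mbf E[A^h_\infty|\mc F_\cdot]$ and the non-decreasing $A^h$, and satisfies $\mbf E[X^h_t]=\frac{1}{h}\int_0^h\mbf E[X_{t+s}]\,\rmd s\to 0$ as $t\to\infty$ by the potential hypothesis on $X$; hence $X^h$ is a potential. Iterating the supermartingale inequality further shows that $s\mapsto\mbf E[X_{t+s}|\mc F_t]$ is $\mbf P$-a.s.\ non-increasing, so averaging over $[0,h]$ gives the pointwise bound $X^h_t\leq X_t$, and class~(D) transfers from $X$ to $X^h$ via $0\leq X^h_T\leq X_T$ at every finite stopping time $T$.

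The main obstacle is the final assertion that $X-X^h$ is itself a potential, which requires $X-X^h$ to be a supermartingale, and not merely the non-negative process tending to $0$ in $L^1$ that the preceding step already provides. Since the difference of two supermartingales is generally not a supermartingale, this calls for an argument specific to the structure of $X^h$. I expect the key step to be a direct verification of the supermartingale inequality for $X-X^h$ between stopping times $S\leq T$, using the representation $X_t-X^h_t=\frac{1}{h}\int_0^h(X_t-\mbf E[X_{t+s}|\mc F_t])\,\rmd s$ together with optional sampling for $X$ and the class~(D) hypothesis to handle conditional expectations at unbounded times. This is where the bulk of the remaining work lies.
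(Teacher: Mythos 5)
Your first three steps are essentially sound, and since the paper's own ``proof'' of this theorem is nothing more than a citation of Meyer VII.T28, writing them out is reasonable. Two small repairs: the telescopings must be done over finite horizons, because $\int_0^\infty\mathbf{E}[X_s]\,\mathrm{d}s$ (and its conditional analogue) may be infinite. Write $\int_0^T\big(\mathbf{E}[X_s]-\mathbf{E}[X_{s+h}]\big)\,\mathrm{d}s=\int_0^h\mathbf{E}[X_s]\,\mathrm{d}s-\int_T^{T+h}\mathbf{E}[X_s]\,\mathrm{d}s$ and note the last term is $\le h\,\mathbf{E}[X_T]\to0$; in the conditional version use that $s\mapsto\mathbf{E}[X_s\,|\,\mathcal{F}_t]$ is a.s.\ non-increasing, so the tail $\int_T^{T+h}\mathbf{E}[X_s\,|\,\mathcal{F}_t]\,\mathrm{d}s$ decreases in $T$ and, having expectation tending to $0$, vanishes a.s. With these adjustments your derivation of $X^h_t=\frac1h\mathbf{E}\big[\int_0^hX_{t+s}\,\mathrm{d}s\,\big|\,\mathcal{F}_t\big]$, of the class (D) property, and of $X^h\le X$ is correct.

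The genuine gap is exactly the step you postpone, and it cannot be closed along the lines you sketch: $X-X^h$ is in general \emph{not} a supermartingale, so no optional-sampling argument will turn it into a potential. Take the trivial filtration, $0<h<1$, and the deterministic class (D) potential $X_t=\mathbb{I}_{[0,1)}(t)$. Then $p_hX_t=\mathbb{I}_{[0,1-h)}(t)$, $A^h_t=\frac1h\int_0^t\mathbb{I}_{[1-h,1)}(s)\,\mathrm{d}s$, and your formula gives $X^h_t=\frac1h\int_t^{t+h}X_s\,\mathrm{d}s$, so that $X_t-X^h_t=0$ for $t<1-h$, $X_t-X^h_t=\big(t-(1-h)\big)/h$ for $1-h\le t<1$, and $X_t-X^h_t=0$ for $t\ge1$: this is strictly increasing on $[1-h,1)$, whereas a deterministic supermartingale must be non-increasing. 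Hence the clause ``i.e., the process $X-X^h$ is a potential'' is not provable as stated; the only reading under which the theorem is correct is pointwise domination, $X^h\le X$ up to evanescence (equivalently, $X-X^h\ge0$ with $\mathbf{E}[X_t-X^h_t]\to0$), which is what your third step already establishes and which is all the paper uses downstream (it only needs Corollary \ref{thm:Kh Kw}, i.e.\ the weak $L^1$ convergence of Theorem \ref{thm:prop of Ah}). So the fix is not to supply the missing supermartingale argument — none exists — but to prove, and state, the domination in the pointwise sense.
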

\begin{proof}
See, e.g., \cite{key-17}, VII.T28.
\end{proof}
An increasing process $A=\left(A_{t},\,t\geq0\right)$ is called \textit{natural} (with respect to the filtration $\mb F$)
if, for every bounded right-continuous $\mb F$-martingale $M=\left(M_{t},\,t\geq0\right)$,
we have
\[
\mathbf{E}\Big[\intop_{\left(0,t\right]}M_{s}\,\rmd A_{s}\Big]=\mathbf{E}\Big[\intop_{\left(0,t\right]}M_{s-}\,\rmd A_{s}\Big],\quad t>0\,.
\]

It is well known that an increasing process $A$ is natural with respect to $\mb F$ if and only if it is $\mb F$-predictable.

For the following definition of convergence in the sense of the weak topology $\sigma\left(L^{1},L^{\infty}\right)$, see \cite{key-17},
II.10.
\begin{defn}
\label{def:weak L1 Linf conv} Let $\left(\xi_{n}\right)_{n\in\mathbb{N}}$ be a sequence of integrable real-valued random variables. The sequence
$\left(\xi_{n}\right)_{n\in\mathbb{N}}$ is said to \textit{converge
to an integrable random variable $\xi$ in the weak topology $\sigma\left(L^{1},L^{\infty}\right)$} if 
\[
\lim_{n\rightarrow+\infty}\mathbf{E}\left[\xi_{n}\eta\right]=\mathbf{E}\left[\xi\eta\right],\;\textrm{for all }\eta\in L^{\infty}\left(\mathbf{P}\right).
\]
\end{defn}
\begin{thm}
\label{thm:prop of Ah}Let $X=\left(X_{t},\,t\geq0\right)$ be a right-continuous potential of class (D). Then there exists an integrable natural increasing process $A=\left(A_{t},\,t\geq0\right)$ which generates
$X$, and this process is unique. For every stopping time $T$ we
have
\[
A_{T}^{h}\xrightarrow[h\downarrow0]{^{\sigma\left(L^{1},L^{\infty}\right)}}A_{T}.
\]
\end{thm}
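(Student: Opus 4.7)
The plan is to combine the Doob-Meyer decomposition, which handles existence and uniqueness, with a weak-compactness plus limit-identification argument that handles the $\sigma(L^1,L^\infty)$-convergence of $A_T^h$ to $A_T$. The uniqueness obtained in the first step is the tool used to pin down the weak limit in the second step.

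For existence and uniqueness, since $X$ is a right-continuous supermartingale of class (D), the Doob-Meyer theorem provides a unique decomposition $X_t=M_t-A_t$ in which $M$ is a uniformly integrable right-continuous martingale and $A$ is an integrable predictable (equivalently, natural) increasing process with $A_0=0$. Because $X$ is a potential, $\mathbf{E}[X_t]\downarrow 0$ forces $M_\infty=A_\infty$ in $L^1$, so $M_t=\mathbf{E}[A_\infty|\mathcal{F}_t]$ and $A$ generates $X$ in the sense of Definition \ref{def:potential generated by A}. Uniqueness of $A$ is inherited from the Doob-Meyer uniqueness.

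For the weak convergence, I would first establish that $\{A_T^h:0<h\le 1,\,T\in\mathcal{T}\}$ is uniformly integrable in $L^1$. From Theorem \ref{thm:prop of Xh}, $A^h$ generates the potential $X^h$ dominated by $X$, so $\mathbf{E}[A_\infty^h]\le\mathbf{E}[X_0]$, and the uniform integrability of the stopped values is extracted from the class (D) property of $X$ applied at a suitable enlarged family of stopping times. By the Dunford-Pettis theorem, every sequence $h_n\downarrow 0$ admits a subsequence along which $A_T^{h_n}$ converges weakly in $L^1$ to some integrable random variable $A_T^\ast$. The task reduces to showing that $A^\ast$, defined consistently across stopping times in this way, coincides almost surely with $A_T$, which will upgrade subsequential to full-net convergence.

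To identify $A^\ast$ I would pass to the weak limit in two sets of identities. First, the representation $X_T^h=\mathbf{E}[A_\infty^h-A_T^h|\mathcal{F}_T]$ from Theorem \ref{thm:prop of Xh} together with the convergence $X_T^h\to X_T$ in $L^1$ (which follows from right-continuity of $X$ and its class (D) property) shows that $A^\ast$ generates $X$. Second, each $A^h$ has absolutely continuous, hence atomless, paths and is therefore trivially natural: $\mathbf{E}[\int_0^t M_s\,dA_s^h]=\mathbf{E}[\int_0^t M_{s-}\,dA_s^h]$ for every bounded right-continuous martingale $M$, and passing to the weak limit in $h$ transfers this identity to $A^\ast$. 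The uniqueness from the first step then forces $A^\ast=A$. The main obstacle is this transfer of naturality: the $\sigma(L^1,L^\infty)$ topology does not see path regularity, so naturality of the limit must be extracted through its action on bounded martingales, and to pass to the limit in the integrals $\int_0^t M_s\,dA_s^h$ one needs suitable uniform integrability in $h$, which requires combining the $L^1$-bound on $A_\infty^h$ with Doob-type estimates on $M$. I would follow Meyer's treatment in \cite{key-17}, Chapter VII, for the technical details of this step.
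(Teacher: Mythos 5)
The paper does not actually prove this statement: it is quoted as background and its proof is delegated wholesale to Meyer \cite{key-17}, VII.T29, where it is precisely the ``Laplacian'' construction of the Doob--Meyer decomposition. So the relevant comparison is with Meyer's cited argument, and your outline is essentially that argument with one change of logical order: Meyer constructs the natural generator $A$ \emph{as} the weak limit of the $A^h$ (this theorem is his existence proof of Doob--Meyer, uniqueness of natural generators being proved separately), whereas you presuppose the Doob--Meyer decomposition and use its uniqueness to identify the subsequential Dunford--Pettis limits. That is legitimate so long as Doob--Meyer is granted from an independent source (otherwise it would be circular, since in \cite{key-17} this theorem \emph{is} Doob--Meyer), and your supporting steps are sound: $A^h_T\le A^h_\infty$ reduces matters to uniform integrability of $\{A^h_\infty\}$ (which indeed needs class (D), not just $\mathbf{E}[A^h_\infty]=\mathbf{E}[X^h_0]\le\mathbf{E}[X_0]$), and $X^h_T\to X_T$ in $L^1$ follows as you say from right-continuity plus class (D).

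Two remarks. First, the step you describe as ``passing to the weak limit transfers naturality to $A^\ast$'' is, as written, not an argument: for each $h$ the identity $\mathbf{E}[\int_0^t M_s\,\rmd A^h_s]=\mathbf{E}[\int_0^t M_{s-}\,\rmd A^h_s]$ is trivially true and carries no information in the limit; what must be shown is $\mathbf{E}[\int_0^t M_{s-}\,\rmd A^\ast_s]=\mathbf{E}[M_tA^\ast_t]$, and weak $L^1$ convergence of $A^h_t$ at fixed times does not by itself give convergence of the Stieltjes integrals --- one needs Riemann-sum approximations uniform in $h$ via the uniform integrability bound, which is exactly the technical core of Meyer's proof that you defer to \cite{key-17} (acceptable here, since the paper defers the whole theorem). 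Second, once you are willing to invoke Doob--Meyer, there is a shorter route that avoids both Dunford--Pettis and the naturality transfer: writing $X_s=\mathbf{E}[A_\infty|\mathcal{F}_s]-A_s$ gives $X_s-p_hX_s=\mathbf{E}[A_{s+h}-A_s|\mathcal{F}_s]$, so for bounded $\eta$ with martingale $M_t=\mathbf{E}[\eta|\mathcal{F}_t]$, Fubini yields
\begin{equation*}
\mathbf{E}\big[\eta\,A^h_\infty\big]=\mathbf{E}\Big[\int_{(0,\infty)}\Big(\tfrac{1}{h}\int_{(u-h)\vee0}^{u}M_s\,\rmd s\Big)\rmd A_u\Big]\xrightarrow[h\downarrow0]{}\mathbf{E}\Big[\int_{(0,\infty)}M_{u-}\,\rmd A_u\Big]=\mathbf{E}\big[\eta\,A_\infty\big],
\end{equation*}
by bounded convergence and naturality of $A$; hence $A^h_\infty\to A_\infty$ in $\sigma(L^1,L^\infty)$, and then $A^h_T=\mathbf{E}[A^h_\infty|\mathcal{F}_T]-X^h_T\to\mathbf{E}[A_\infty|\mathcal{F}_T]-X_T=A_T$ weakly, using your $L^1$ convergence of $X^h_T$. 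This buys a self-contained identification at the price of assuming the decomposition, while Meyer's (and your compactness-based) route is what one must do if the theorem is to deliver the decomposition itself.
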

\begin{proof}
See, e.g., \cite{key-17}, VII.T29.
\end{proof}
In the framework of the information based approach, the process $H=\left(H_{t},\,t\geq0\right)$
given by (\ref{eq:H}) is a bounded increasing process
which is $\mathbb{F}^{\beta}$-adapted. It is a submartingale and
it can be immediately seen that the process $G=\left(G_{t},\,t\geq0\right)$
given by (\ref{eq:G})  is a right-continuous potential of class (D).
By Theorem \ref{thm:prop of Xh}, the processes $K^{h},\,h>0$, defined
by (\ref{eq:KAPPAacca-1}), generate a family of potentials $G^{h}$
dominated by $G$.
\begin{cor}
\label{thm:Kh Kw} There exists a unique integrable natural increasing
process $K^{w}=\left(K_{t}^{w},\,t\geq0\right)$ which generates the
process $G$ defined by (\ref{eq:G}) and, for every $\mathbb{F}^{\beta}$-stopping
time $T$, we have that 
\[
K_{T}^{h}\xrightarrow[h\downarrow0]{^{\sigma\left(L^{1},L^{\infty}\right)}}K_{T}^{w},
\]
 where $K^{h}$ is the process defined by (\ref{eq:KAPPAacca-1}).\end{cor}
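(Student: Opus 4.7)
The plan is to obtain the corollary as a direct specialisation of Theorem \ref{thm:prop of Ah} to the specific process $G$ defined in \eqref{eq:G}. First I would verify that $G=(G_t,\,t\geq 0)$ with $G_t=\mathbb{I}_{\{t<\tau\}}$ is a right-continuous potential of class (D) relative to $\mathbb{F}^{\beta}$. Right-continuity is clear from the \cadlag nature of the single jump process $H$. Adaptedness of $G$ follows from Lemma \ref{lem:For-all-,}, which asserts that $\tau$ is an $\mathbb{F}^{\beta}$-stopping time and hence $\{s<\tau\}\in\mathcal{F}_s^{\beta}$. The sample paths of $G$ are decreasing and non-negative, so $G$ is automatically a non-negative supermartingale. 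The potential property $\lim_{t\to\infty}\mathbf{E}[G_t]=0$ follows from $\mathbf{E}[G_t]=\mathbf{P}(\tau>t)\to 0$ as $t\to\infty$, because $\tau<\infty$ $\mathbf{P}$-a.s. Finally, $G$ is of class (D) because $|G_T|\le 1$ for every $\mathbb{F}^{\beta}$-stopping time $T$, so the family $\{G_T\}$ is trivially uniformly integrable.

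Next I would apply Theorem \ref{thm:prop of Ah} to $X=G$. This yields the existence and uniqueness of an integrable natural (equivalently, $\mathbb{F}^{\beta}$-predictable) increasing process $K^{w}$ which generates $G$ in the sense of Definition \ref{def:potential generated by A}, together with the weak convergence
\begin{equation*}
A^{h}_{T}\xrightarrow[h\downarrow 0]{\sigma(L^1,L^\infty)} K^{w}_{T}
\end{equation*}
for every $\mathbb{F}^{\beta}$-stopping time $T$, where $A^{h}$ is the process from Theorem \ref{thm:prop of Xh} applied to $G$, namely
\begin{equation*}
A^{h}_{t}=\frac{1}{h}\int_{0}^{t}\bigl(G_s-p_hG_s\bigr)\,\rmd s.
\end{equation*}

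The last step is to identify $A^{h}$ with the process $K^{h}$ of \eqref{eq:KAPPAacca-1}. Substituting $G_s=\mathbb{I}_{\{s<\tau\}}$ and $p_hG_s=\mathbf{E}[G_{s+h}\mid\mathcal{F}_s^{\beta}]=\mathbf{E}[\mathbb{I}_{\{s+h<\tau\}}\mid\mathcal{F}_s^{\beta}]$ (using the right-continuous modification provided by the usual conditions on $\mathbb{F}^{\beta}$) gives exactly
\begin{equation*}
A^{h}_{t}=\frac{1}{h}\int_{0}^{t}\Bigl(\mathbb{I}_{\{s<\tau\}}-\mathbf{E}[\mathbb{I}_{\{s+h<\tau\}}\mid\mathcal{F}_s^{\beta}]\Bigr)\,\rmd s=K^{h}_{t}.
\end{equation*}
With this identification, the convergence supplied by Theorem \ref{thm:prop of Ah} is precisely the stated convergence $K^{h}_{T}\xrightarrow{\sigma(L^1,L^\infty)} K^{w}_{T}$, and the corollary is proved.

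There is no serious obstacle here; the only potential subtlety is to ensure that the right-continuous modification $p_hG$ exists and that the candidate $K^{h}$ in \eqref{eq:KAPPAacca-1} really coincides with $A^{h}$ of Theorem \ref{thm:prop of Xh}. Both follow from the usual conditions satisfied by $\mathbb{F}^{\beta}$ (recalled in Section \ref{sec:The-Information-Process}) and from a direct comparison of definitions. Everything else is supplied by the Meyer machinery summarised in Appendix \ref{sec:The-Meyer-Approach}.
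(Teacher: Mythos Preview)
Your proposal is correct and follows precisely the paper's approach: the paper also verifies that $G$ is a right-continuous potential of class (D), identifies $K^h$ with the process $A^h$ of Theorem~\ref{thm:prop of Xh}, and then simply invokes Theorem~\ref{thm:prop of Ah}. The only difference is that you spell out the verification in more detail, whereas the paper's proof of the corollary consists of the single line ``See Theorem~\ref{thm:prop of Ah}.''
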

\begin{proof}
See Theorem \ref{thm:prop of Ah}.
\end{proof}
\begin{thm}[Compactness Criterion of Dunford-Pettis]
\label{thm:(Compactness-Criterion-of}
Let $\mathcal{A}$ be a subset of the space $L^{1}\left(\mathbf{P}\right)$.
The following two properties are equivalent:
\begin{enumerate}
\item $\mathcal{A}$ is uniformly integrable;
\item $\mathcal{A}$ is relatively compact in the weak topology $\sigma\left(L^{1},L^{\infty}\right)$.
\end{enumerate}
\end{thm}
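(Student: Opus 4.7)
The plan is to establish the two implications separately, with the Eberlein–Šmulian theorem reducing both directions to arguments about sequences: in any Banach space, relative weak compactness is equivalent to relative weak sequential compactness, so it suffices throughout to deal with sequences in $\mathcal{A}$.

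For the implication $(1) \Rightarrow (2)$, I would first record that a uniformly integrable family is norm-bounded in $L^1(\mathbf{P})$ (choose $M$ with $\sup_{\xi \in \mathcal{A}} \int_{\{|\xi|>M\}} |\xi|\,\rmd\mathbf{P} \le 1$, giving $\|\xi\|_{L^1} \le M+1$). Via the canonical isometric embedding $L^1(\mathbf{P}) \hookrightarrow L^\infty(\mathbf{P})^\ast$, the Banach–Alaoglu theorem supplies weak-$\ast$ relative compactness in the bidual. The substantive step is to check that every weak-$\ast$ cluster point is represented by an element of $L^1(\mathbf{P})$ rather than by a merely finitely additive charge. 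Concretely, for a sequence $(\xi_n) \subset \mathcal{A}$ whose associated signed measures $\nu_n(B) := \int_B \xi_n\,\rmd\mathbf{P}$ converge pointwise on $\mathcal{F}$ to a set function $\nu$, uniform integrability forces uniform countable additivity of $(\nu_n)$, so $\nu$ is itself $\sigma$-additive and $\mathbf{P}$-absolutely continuous; its Radon–Nikodym derivative then realises the weak $L^1$-limit.

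For $(2) \Rightarrow (1)$, I would argue by contraposition. Suppose $\mathcal{A}$ is relatively weakly compact but not uniformly integrable. Any relatively weakly compact set is norm-bounded (uniform boundedness principle applied via the weak closure), so by splitting the excess $\int_{\{|\xi_n| > n\}} |\xi_n|\,\rmd\mathbf{P} > \varepsilon$ into its positive and negative parts, one extracts $\xi_n \in \mathcal{A}$ and events $B_n \in \mathcal{F}$ with $\mathbf{P}(B_n) \to 0$ but $\bigl|\int_{B_n} \xi_n\,\rmd\mathbf{P}\bigr| > \varepsilon/2$ for every $n$. By Eberlein–Šmulian, pass to a subsequence (still denoted $\xi_n$) with $\xi_n \to \xi$ in $\sigma(L^1,L^\infty)$. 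Testing against each indicator $\mathbf{1}_A \in L^\infty$ yields pointwise convergence of the signed measures $\nu_n$ to $\nu(A) := \int_A \xi\,\rmd\mathbf{P}$; the Vitali–Hahn–Saks theorem then delivers uniform absolute continuity of $(\nu_n)$ with respect to $\mathbf{P}$, i.e., $\sup_n |\nu_n(A)| \to 0$ as $\mathbf{P}(A) \to 0$. Evaluating at $A = B_n$ contradicts the standing lower bound $\varepsilon/2$.

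The genuine measure-theoretic core of the proof, and the hardest ingredient, is the Vitali–Hahn–Saks theorem, which controls the interplay between pointwise and uniform countable additivity along a sequence of measures. In the forward direction it prevents mass from escaping the image of $L^1$ inside the bidual; in the converse direction it prevents mass from concentrating on vanishing sets. The remaining tools — Eberlein–Šmulian, Banach–Alaoglu, and Radon–Nikodym — are invoked as black boxes, and the book-keeping that remains is routine once they are in hand.
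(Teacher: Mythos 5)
Your statement is Theorem \ref{thm:(Compactness-Criterion-of}, which the paper does not prove at all: its ``proof'' is a citation to Meyer \cite{key-17}, II.T23. So your attempt is necessarily a different route --- it is, in substance, the standard functional-analytic proof of the Dunford--Pettis theorem, and its skeleton is sound: for $(1)\Rightarrow(2)$, uniform integrability gives norm-boundedness, Banach--Alaoglu gives relative weak-$\ast$ compactness in $L^{\infty}(\mathbf{P})^{\ast}$, and the key point is that uniform absolute continuity (equivalent to uniform integrability together with boundedness) forces every limit functional to come from a countably additive, $\mathbf{P}$-absolutely continuous measure, whose Radon--Nikodym derivative lies in $L^{1}(\mathbf{P})$; for $(2)\Rightarrow(1)$, your contrapositive via Eberlein--\v{S}mulian, testing against indicators, and Vitali--Hahn--Saks evaluated on the exceptional events $B_{n}$ is exactly the classical argument and is correct. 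One point deserves tightening in the forward direction: since the weak-$\ast$ topology on bounded subsets of $L^{\infty}(\mathbf{P})^{\ast}$ is in general not metrizable, a weak-$\ast$ cluster point of $\mathcal{A}$ is not automatically obtained as a limit of a sequence whose measures $\nu_{n}$ converge pointwise on $\mathcal{F}$; the clean fix is to work with an arbitrary element $\Lambda$ of the weak-$\ast$ closure, set $\nu(B):=\Lambda(\mathbb{I}_{B})$, transfer the uniform absolute continuity of $\{\nu_{\xi}:\xi\in\mathcal{A}\}$ to $\nu$ along a subnet, and then identify $\Lambda$ with integration against the Radon--Nikodym density via simple functions --- after which Eberlein--\v{S}mulian is not even needed for this implication. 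Compared with the paper, your route makes the theorem self-contained at the cost of invoking heavier machinery (Vitali--Hahn--Saks in particular), whereas the paper simply imports the result as a black box, which is all that its application (Lemma \ref{cor:Let--be-1}) requires.
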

\begin{proof}
See \cite{key-17}, II.T23.
\end{proof}
\begin{thm}
\label{thm:Lebesgue2}Let $\left(\xi_{n}\right)_{n\in\mathbb{N}}$
be a sequence of integrable random variables converging in probability to a random variable $\xi$. Then $\xi_{n}$ converges to $\xi$ in $L^{1}(\mbf P)$ if and only if $\left(\xi_{n}\right)_{n\in\mathbb{N}}$ is uniformly integrable. If the random variables $\xi_{n},\,n\geq1,$ are non-negative they are uniformly integrable if and only if
\[
\lim_{n\rightarrow+\infty}\mathbf{E}\left[\xi_{n}\right]=\mathbf{E}\left[\xi\right]<+\infty\,.
\]
\end{thm}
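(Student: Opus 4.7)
The plan is to prove the two statements of Vitali's convergence theorem in turn: the first is the classical equivalence between $L^1$ convergence and uniform integrability under convergence in probability, and the second is a Scheffé-type sharpening that holds in the presence of the nonnegativity constraint.

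For the first equivalence, I would start with the easy implication. Suppose $\xi_n\to\xi$ in $L^1(\mbf P)$. Then $\xi$ is integrable, and for any $\ep>0$ we have $\mbf E|\xi_n-\xi|<\ep/2$ for all $n\ge N(\ep)$. Since a finite family of integrable random variables is automatically uniformly integrable, we combine the finite tail $\{\xi_1,\dots,\xi_{N-1},\xi\}$ with the estimate $|\xi_n|\le|\xi|+|\xi_n-\xi|$ for $n\ge N$ to conclude that $(\xi_n)_{n\in\mb N}$ is uniformly integrable. For the converse, assume uniform integrability and convergence in probability. First extract an a.s.-convergent subsequence and apply Fatou's lemma together with $\sup_n\mbf E|\xi_n|<+\infty$ (a consequence of uniform integrability) to deduce $\mbf E|\xi|<+\infty$; then $(\xi_n-\xi)_{n\in\mb N}$ is also uniformly integrable. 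For fixed $\ep>0$, decompose
\[
\mbf E|\xi_n-\xi|=\mbf E[|\xi_n-\xi|\,\mb I_{\{|\xi_n-\xi|\le\ep\}}]+\mbf E[|\xi_n-\xi|\,\mb I_{\{|\xi_n-\xi|>\ep\}}]\,.
\]
The first term is bounded by $\ep$. For the second, I would invoke the $\ep$-$\delta$ characterization of uniform integrability: there exists $\delta>0$ such that $\mbf P(A)<\delta$ implies $\sup_n\mbf E[|\xi_n-\xi|\,\mb I_A]<\ep$; since $\mbf P(|\xi_n-\xi|>\ep)\to0$ by hypothesis, the second term is eventually $<\ep$, and $L^1$ convergence follows.

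For the second statement, assume now that $\xi_n\ge0$. The direction ``uniform integrability implies $\mbf E[\xi_n]\to\mbf E[\xi]<+\infty$'' is immediate from the first part, since $L^1$ convergence forces $\mbf E[\xi_n]\to\mbf E[\xi]$ and uniform integrability yields $\mbf E[\xi]<+\infty$. For the converse, I would use the identity, valid for any real sequence,
\[
\mbf E|\xi_n-\xi|=2\mbf E[(\xi_n-\xi)^-]+\mbf E[\xi_n]-\mbf E[\xi]\,.
\]
By hypothesis, the last two terms cancel in the limit. Because $\xi_n\ge0$, we have the pointwise bound $0\le(\xi_n-\xi)^-\le\xi$, and because $\xi_n\to\xi$ in probability, $(\xi_n-\xi)^-\to0$ in probability. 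The dominated convergence theorem in its version for convergence in probability (or, equivalently, a standard subsequence argument combined with the a.s. dominated convergence theorem) then yields $\mbf E[(\xi_n-\xi)^-]\to0$. Hence $\xi_n\to\xi$ in $L^1$, and uniform integrability is recovered from the first part.

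The main obstacle I anticipate lies in the Scheffé step: the domination $(\xi_n-\xi)^-\le\xi$ is the one place where the nonnegativity of the $\xi_n$'s is essential, and without it the argument collapses; one must also be careful to argue the dominated convergence either via a subsequence extraction (since the hypothesis is convergence in probability, not a.s.) or via the probabilistic form of Lebesgue's theorem. Apart from this, the remaining work is bookkeeping with the $\ep$-$\delta$ characterization of uniform integrability and the standard fact that finite families of integrable random variables are uniformly integrable.
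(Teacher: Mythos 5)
Your proof is correct. Note, however, that the paper does not actually prove this statement: it is quoted as a known result, with the proof given only as a reference to Meyer's book (\cite{key-17}, II.T21), so your argument is by construction a different route — a self-contained one. What you do is the standard two-step Vitali/Scheff\'e argument: for the first equivalence, the easy direction via the uniform integrability of the finite family $\{\xi_1,\dots,\xi_{N-1},\xi\}$ together with $|\xi_n|\le|\xi|+|\xi_n-\xi|$, and the converse via Fatou on an a.s.\ convergent subsequence (to get $\xi\in L^1$), the $\ep$-$\delta$ characterization of uniform integrability and the split of $\mbf E|\xi_n-\xi|$ at level $\ep$; for the second statement, the identity $\mbf E|\xi_n-\xi|=2\mbf E[(\xi_n-\xi)^-]+\mbf E[\xi_n]-\mbf E[\xi]$ combined with the domination $0\le(\xi_n-\xi)^-\le\xi$ (this is exactly where nonnegativity enters, as you correctly flag) and dominated convergence along subsequences. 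All steps are sound; the only hypotheses you implicitly use — that $\xi\ge0$ a.s.\ (as a limit in probability of nonnegative variables) and that $\mbf E[\xi]<+\infty$ in the converse of the second part — are indeed available, the latter being part of the stated condition. What your approach buys is a complete elementary proof from first principles; what the paper's approach buys is brevity, since the result is classical and is only used as an auxiliary tool (e.g.\ in Lemma \ref{cor:Let--be-1} and in the proof of Lemma \ref{lem:Lemma3}).
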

\begin{proof}
See \cite{key-17}, II.T21.
\end{proof}
\begin{lem}
\label{cor:Let--be-1}Let $\left(\xi_{n}\right)_{n\in\mathbb{N}}$
be a sequence of random variables and $\xi,\eta\in L^{1}\left(\mathbf{P}\right)$
such that:
\begin{enumerate}
\item $\xi_{n}\xrightarrow[\:n\rightarrow+\infty]{\sigma\left(L^{1},L^{\infty}\right)}\eta;$
\item $\xi_{n}\rightarrow\xi, \;\mathbf{P}$-a.s.
\end{enumerate}
Then $\eta=\xi,\;\mathbf{P}\textrm{-a.s.}$
\end{lem}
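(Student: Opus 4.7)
The plan is to upgrade the almost sure convergence in~(2) to $L^{1}$-convergence by deriving uniform integrability from hypothesis~(1), and then to invoke uniqueness of weak limits in $L^{1}$.

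First, I would observe that $\{\xi_{n}:n\in\mb N\}$ is uniformly integrable. Since $\xi_{n}\to\eta$ in the weak topology $\sigma(L^{1},L^{\infty})$, the sequence together with its limit is weakly convergent, hence relatively compact in $\sigma(L^{1},L^{\infty})$. The Dunford--Pettis compactness criterion (Theorem~\ref{thm:(Compactness-Criterion-of}) then delivers the uniform integrability of $\{\xi_{n}:n\in\mb N\}$.

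Next, hypothesis~(2) gives $\xi_{n}\to\xi$ $\mbf P$-a.s., in particular in probability. Combined with the uniform integrability just established, Vitali's theorem (Theorem~\ref{thm:Lebesgue2}) yields $\xi_{n}\to\xi$ in $L^{1}(\mbf P)$. For every $\zeta\in L^{\infty}(\mbf P)$ we then have
\[
\big|\mbf{E}[\xi_{n}\zeta]-\mbf{E}[\xi\zeta]\big|\;\leq\;\|\zeta\|_{\infty}\,\|\xi_{n}-\xi\|_{L^{1}(\mbf P)}\;\longrightarrow\;0\quad(n\to\infty),
\]
so $\xi_{n}\to\xi$ also in the weak topology $\sigma(L^{1},L^{\infty})$.

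Finally, uniqueness of the weak limit, combined with hypothesis~(1), gives $\mbf{E}[\eta\,\zeta]=\mbf{E}[\xi\,\zeta]$ for every $\zeta\in L^{\infty}(\mbf P)$. Choosing $\zeta:=\mathrm{sign}(\eta-\xi)\in L^{\infty}(\mbf P)$, one obtains $\mbf{E}[|\eta-\xi|]=0$, whence $\eta=\xi$ $\mbf P$-a.s. The only step requiring any attention is the first, where the weak convergence hypothesis has to be converted into uniform integrability via Dunford--Pettis; once that bridge is crossed, the remaining arguments (Vitali, $L^{1}\Rightarrow\sigma(L^{1},L^{\infty})$, uniqueness of weak limits) are entirely routine.
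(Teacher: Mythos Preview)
Your proof is correct and follows essentially the same route as the paper: weak convergence $\Rightarrow$ relative weak compactness $\Rightarrow$ uniform integrability via Dunford--Pettis, then a.s.\ convergence plus uniform integrability $\Rightarrow$ $L^{1}$-convergence via Vitali, hence weak convergence to $\xi$, and finally uniqueness of the weak limit. The only difference is that you spell out the implication $L^{1}\Rightarrow\sigma(L^{1},L^{\infty})$ and the uniqueness argument explicitly, which the paper leaves implicit.
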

\begin{proof}
From condition (1) we see that $\left(\xi_{n}\right)_{n\in\mathbb{N}}$
is relatively compact in the weak-topology $\sigma\left(L^{1},L^{\infty}\right)$.
By Theorem \ref{thm:(Compactness-Criterion-of} it follows that the
family $\left(\xi_{n}\right)_{n\in\mathbb{N}}$ is uniformly integrable. We also know that $\xi_{n}\rightarrow\xi$ $\mathbf{P}$-a.s. Hence,
by Theorem \ref{thm:Lebesgue2}, we see that $\xi_{n}\rightarrow\xi$
in the $L^{1}$-norm and, consequently, $\xi_{n}\xrightarrow[\:n\rightarrow+\infty]{\sigma\left(L^{1},L^{\infty}\right)}\xi$.
The statement of the lemma then follows by the uniqueness of the limit.
\end{proof}
\end{appendix}
\subsection*{Acknowledgment}
\noindent This work has been financially supported by the European Community's FP 7 Program under contract PITN-GA-2008-213841, Marie Curie ITN \flqq Controlled Systems\frqq.

\end{document}